\documentclass[12pt]{amsart}
\usepackage{a4wide,enumerate,xcolor,amssymb}
\usepackage{amsmath,comment,graphicx}
\allowdisplaybreaks
\usepackage{enumitem}
\usepackage{float}
\setlist[enumerate]{label={\rm(\arabic*)}, leftmargin=28pt, itemsep=3pt}
\setlist[itemize]{label={$\bullet$}, leftmargin=12pt, itemsep=3pt}

\let\pa\partial
\let\na\nabla
\let\eps\varepsilon
\newcommand{\N}{{\mathbb N}}
\newcommand{\R}{{\mathbb R}}
\newcommand{\diver}{\operatorname{div}}

\DeclareMathOperator*{\argmin}{arg\,min}

\newcommand{\graysquare}{\textcolor{gray}{\blacksquare}}
\newcommand{\qedd}{{\hfill$\graysquare$}}

\newtheorem{theorem}{Theorem}
\newtheorem{lemma}[theorem]{Lemma}
\newtheorem{proposition}[theorem]{Proposition}
\newtheorem{remark}[theorem]{Remark}

\newtheorem{definition}{Definition}

\begin{document}

\title[Cross-diffusion systems seen as Wasserstein gradient flows]{A review of compactness methods for cross-diffusion systems seen as Wasserstein gradient flows}

\author[M. Dus]{Mathias Dus}
\address{Institute of Analysis and Scientific Computing, TU Wien, Wiedner Hauptstra\ss e 8--10, 1040 Wien, Austria \& \newline IRMA, UMR 7501, Universit\'e de Strasbourg, 7 rue Ren\'e-Descartes, 67084 Strasbourg Cedex, France}
\email{dus@unistra.fr}

\author[A. J\"ungel]{Ansgar J\"ungel}
\address{Institute of Analysis and Scientific Computing, TU Wien, Wiedner Hauptstra\ss e 8--10, 1040 Wien, Austria}
\email{juengel@tuwien.ac.at} 

\date{\today}

\thanks{The authors acknowledge partial support from   
the Austrian Science Fund (FWF), grant 10.55776/PAT2687825, and from the Austrian Federal Ministry for Women, Science and Research and implemented by \"OAD, project MultHeFlo. This work has received funding from the European Research Council (ERC) under the European Union's Horizon 2020 research and innovation programme, ERC Advanced Grant NEUROMORPH, no.~101018153. For open access purposes, the authors have applied a CC BY public copyright license to any author-accepted manuscript version arising from this submission.} 

\begin{abstract}
A comprehensive methodology for establishing the existence of gradient flows for cross-diffusion systems with respect to suitable energies is proposed. The approach is based on the construction of piecewise-in-time constant approximations via the Jordan--Kinderlehrer--Otto scheme. Compactness of the approximate sequence is obtained using either the flow interchange technique or the five gradient inequality. These methods are illustrated for both parabolic and hyperbolic--parabolic Busenberg--Travis systems, as well as for several of their variants. This paper reviews the results from the literature and discusses additional properties. 
\end{abstract}

% \paragraph{Keywords:}  
\keywords{Cross-diffusion systems, Wasserstein gradient flow, flow interchange, fourth-order systems, Busenberg--Travis equations.}  
 
% \paragraph{AMS classification:}  
\subjclass[2000]{35K51, 35A15, 76T99.}

\maketitle

%%%%%%%%%%%%%%%%%%%%%%%%%%%%%%%%%%%%%%%%%%%%%%%%%%%%%%%%%%%%%%%%%%%

\section{Introduction}

Many applications consist of multiple components whose diffusive transport can be modeled by parabolic cross-diffusion systems. The state is given by the densities $u_1,\ldots,u_N$ of $N$ species interacting with each other. Examples include gas mixtures, competing population species, enzymes in biological cells, and neural networks. Neglecting drift terms and reactions and focusing on diffusion phenomena, it is often possible to write the cross-diffusion systems in the form
\begin{align}\label{1.eq}
  \pa_t u_i = \diver\bigg(u_i\na\frac{\pa\mathcal{E}}{\pa u_i}(u)\bigg)
  \quad\mbox{in }\Omega, \quad 
  u_i\na\frac{\pa\mathcal{E}}{\pa u_i}(u)\cdot\nu = 0\quad\mbox{on }\pa\Omega,
  \ t>0,
\end{align}
where $\Omega\subset\R^d$ ($d\ge 1$) is a convex compact domain, $\nu$ denotes the exterior normal unit vector to the boundary $\pa\Omega$, $\mathcal{E}$ is the energy of the system, and we impose the initial conditions $u_i(0)=u_i^0$ in $\Omega$, $i=1,\ldots,N$. 

System \eqref{1.eq} admits a Wasserstein gradient flow structure, where the evolution can be seen as a steepest descent of the energy functional in a metric space of measures, providing a clear geometric interpretation of the flow and a connection to thermodynamic principles. Moreover, the structure enables a powerful existence theory via minimizing movements as well as stability and uniqueness results through metric convexity. Surprisingly, the question of whether system \eqref{1.eq} can be formulated rigorously as a Wasserstein gradient flow is very delicate, and there are only few results in the literature. 

In this paper, we review some compactness methods for cross-diffusion systems that can be formulated as a Wasserstein gradient flow, focusing on the generalized Busenberg--Travis equations (and their variants)
\begin{equation}\label{1.BT}
\begin{aligned}
  & \pa_t u_i = \diver(u_i\na p_i(u)), \quad 
  p_i(u) = \sum_{j=1}^N a_{ij}u_j\quad\mbox{in }\Omega,\ t>0, \\
  & u_i\na p_i(u)\cdot\nu = 0\quad\mbox{on }\pa\Omega,\quad
  u_i(0)=u_i^0\quad\mbox{in }\Omega,\ i=1,\ldots,N.
\end{aligned}
\end{equation}
If $A=(a_{ij})$ is positive definite, equations \eqref{1.BT} are parabolic in the sense of Petrovskii, and we call them the parabolic Busenberg--Travis system. When $A$ has not full rank, system \eqref{1.BT} has a symmetric hyperbolic part \cite[Theorem 2.5]{DHJ23}. In this case, \eqref{1.BT} is denoted as the hyperbolic--parabolic Busenberg--Travis equations.

In the case of two species $N=2$ and $a_{ij}=k_i>0$, equations \eqref{1.BT} have been first suggested in \cite{BuTr83,GuPi84} to describe segregating populations. Equations \eqref{1.BT} with $N=2$ have been derived from an interacting particle system in the many-particle limit \cite{GaSe14}. A derivation for multiple species, assuming moderate interactions and using a mean-field approach, can be found in \cite{CDJ19}. An existence proof for general coefficients $a_{ij}$ was given in \cite{GaSe14} for two species and in \cite[Appendix B]{JPZ22} for an arbitrary number of species, assuming that $(a_{ij})$ is positive definite. 

The case that the matrix $(a_{ij})$ has not full rank was first studied in \cite{BGHP85} for two species and in one space dimensions. Later,  generalizations to multiple species and several dimensions have been investigated, mostly for non-segregating solutions \cite{BHIM12,DHJ23,DrJu20,GSV15,HoJu25,GPS19}. The authors of \cite{CFSS18,KiMe18} were able to prove the existence of segregating weak solutions in one space dimension, while the existence of segregating Lagrangian solutions was shown in \cite{Jac25}. 

Equations \eqref{1.BT} can be rigorously formulated as a Wasserstein gradient flow in the space of probability measures. This was shown for positive definite matrices $(a_{ij})$ and in one space dimension in \cite{LaMa13}. A weak solution to \eqref{1.BT} was shown to exist as the limit of the minimizers of the Jordan--Kinderlehrer--Otto (JKO) scheme. This approach was also applied to the hyperbolic--parabolic model; see, e.g., \cite{CFSS18,Lab20}. Related Busenberg--Travis models include, for instance, dispersal potentials \cite{Igb24} or nonlocal approximations \cite{DiFa13,MaPa24}. 

We also mention other cross-diffusion systems that can be formulated as a Wasserstein gradient flow, like Keller--Segel models \cite{BCC08}, Cahn--Hilliard-type fourth-order systems \cite{MaZi17}, and incompressible immiscible multiphase flows \cite{CGM17}, but we focus on the Busenberg--Travis model.

In this paper, we review compactness methods used to analyze the parabolic Busen\-berg--Travis equations \eqref{1.BT} seen as a Wasserstein gradient flow (Section \ref{sec.ex}). Furthermore, we discuss a fourth-order variant (Section \ref{sec.fourth}) and introduce a new definition of solution to the hyperbolic--parabolic Busenberg--Travis model (Section \ref{sec.hyper}). Finally, we discuss the effects from the additional diffusion term coming from the Shigesada--Kawasaki--Teramoto model, which is related to \eqref{1.BT} (Section \ref{sec.SKT}). For the convenience of the reader, we recall basic elements of optimal transport theory in Section \ref{sec.basics}.

%%%%%%%%%%%%%%%%%%%%%%%%%%%%%%%

\section{Basic elements of optimal transport theory}\label{sec.basics}

We briefly recall the basic notions of optimal transport theory. For a detailed exposition, we refer to the monographs \cite{AGS05,San15}. Let $\Omega$ be a convex compact domain and $P_2(\Omega)$ be the space of probability measures on $\Omega$ with finite second-order moment (which is equal here to the entire space of probability measures since $\Omega$ is compact). Let $\Pi(\mu,\nu)$ for $\mu$, $\nu\in P_2(\Omega)$ be the set of probability measures on $\Omega\times\Omega$ having $\mu$ and $\nu$ as first and second marginals, respectively, i.e., $\gamma\in\Pi(\mu,\nu)$ if for all $\phi$, $\psi\in C_c(\Omega)$,
\begin{align*}%\label{2.marg}
  \int_{\Omega\times\Omega}\phi(x)d\gamma(x,y) 
  = \int_\Omega\phi(x)d\mu(x), \quad
  \int_{\Omega\times\Omega}\psi(y)d\gamma(x,y) 
  = \int_\Omega\psi(y)d\nu(y).
\end{align*}
Let $X$ and $Y$ be two measurable spaces, $\theta:X\to Y$ be a measurable function, and $\mu\in P(X)$. The push-forward $\theta_{\#}\mu$ of $\mu$ is defined by
\begin{align*}
  \theta_{\#}\mu(A) = \mu(\theta^{-1}(A))
  \quad\mbox{for all measurable sets }A \mbox{ of }Y
\end{align*}
or, more generally, for measurable functions $\phi:Y\to\R$,
\begin{align*}
  \int_Y\phi(y)d\theta_{\#}\mu(y) = \int_X\phi(\theta(x))d\mu(x).
\end{align*}
If $\mu=udx$ is absolutely continuous (with respect to the Lebesgue measure) and $\theta$ is a one-to-one Lipschitz continuous function satisfying $\det\theta'\neq 0$ on $\operatorname{supp}(u)$ (with $\theta'$ being the Jacobian of $\theta$), then $\theta_{\#}\mu=:vdx$ is also absolutely continuous and it holds that
\begin{align}\label{2.cov}
  v = \frac{u\circ\theta^{-1}}{\det(\theta'\circ\theta^{-1})}.
\end{align}

For given $\mu$, $\nu\in P_2(\Omega)$, we denote by $W_2(\mu,\nu)$ the {\em Wasserstein distance} between $\mu$ and $\nu$, defined by
\begin{align*}%\label{1.min}
  W_2^2(\mu,\nu) = \inf_{\gamma\in\Pi(\mu,\nu)}
  \int_{\Omega\times\Omega}|x-y|^2 d\gamma(x,y).
\end{align*}
This minimization problem (called Kantorovich's problem) has an optimal solution, which is generally not unique \cite[Theorem 6.1.4]{AGS05}. If the measure $\mu$ is absolutely continuous, the optimal coupling (or optimal transport plan) $\gamma$ is unique and has the specific form $\gamma = (I\times\theta)_{\#}\mu$, where $I$ is the identity, $\theta = I-\na\phi$ is the transport map pushing $\mu$ to $\nu$, and $\phi$ is the so-called {\em Kantorovich potential} \cite[Theorem 6.2.4]{AGS05}. The function $\phi$ is $\lambda$-convex with value $\lambda=1$, i.e., for all $x_0$, $x_1\in\Omega$, and $t\in[0,1]$,
\begin{align*}
  \phi((1-t)x_0+tx_1) \le (1-t)\phi(x_0) + t\phi(x_1)
  + \frac{\lambda}{2}t(1-t)|x_1-x_0|^2.
\end{align*}
It follows from the definition of the Kantorovich potential that
\begin{align}\label{2.W2Kant}
  W_2^2(\mu,\nu) = \int_{\Omega\times\Omega}|x-(x-\na\phi(x))|^2
  d\mu(x) = \int_{\Omega\times\Omega}|\na\phi(x)|^2\mu(x)dx.
\end{align}

We describe the state of a cross-diffusion system by a vector of probability measures for which the corresponding measure is the product metric on $P_2(\Omega)^N$, defined by
\begin{align*}
  W_2^2(\mu,\nu) := \sum_{i=1}^n W_2^2(\mu_i,\nu_i)
\end{align*}
for all $\mu=(\mu_1,\ldots,\mu_n)$, $\nu=(\nu_1,\ldots,\nu_n)\in P_2(\Omega)^N$.

The metric space $(P_2(\Omega),W_2)$ is a length space in the sense of the following definition.

\begin{definition}
Let $(X,d)$ be a metric space. For all $(x,y) \in X^2$, define the distance
\begin{equation}\label{eq:def_length_space}
  d_\ell(x,y) := \inf_{\stackrel{\eta \in C([0,1]; X),}{\eta(0)=x,\,
  \eta(1)=y}}\sup_{\stackrel{0 = t_0 < t_1 < \cdots < t_N = 1,}{N\in\N}}
  \sum_{i=0}^{N-1} d(\eta(t_i), \eta(t_{i+1})).
\end{equation}
If $d=d_\ell$, then the metric space $(X,d)$ is said to be a length space. If the infimum in \eqref{eq:def_length_space} is reached for some curve $\eta$, we say that $\eta$ is a geodesic linking $x$ to $y$. It is an easy but tedious exercise to prove that one can reparametrize $\eta$ so that it is a constant speed geodesic, i.e.
\begin{equation}\label{eq:constant_speed_geodesic}
  d(\eta_s,\eta_t) = (t-s) d(x,y)
  \quad\mbox{for all }0 \leq s \leq t \leq 1.
\end{equation}
The metric space $(X,d)$ is said to be a geodesic space if such an infimum is reached for all couples $(x,y) \in X^2$.
\end{definition} 

\begin{remark}\rm 
Conversely, if a curve $\eta: [0,1] \rightarrow X$ satisfies \eqref{eq:constant_speed_geodesic}, then the infimum in \eqref{eq:def_length_space} is reached and $\eta$ is a geodesic (of constant speed). Indeed, 
$$
  \sum_{i=0}^{N-1} d(\eta(t_i), 
  \eta(t_{i+1})) = \sum_{i=0}^{N-1} (t_{i+1} -t_i)d(x, y) = d(x,y)
$$
for every partition $(t_i)_i$ of $[0,1]$. 
\qedd\end{remark}

The space $(P_2(\Omega),W_2)$ is in fact a geodesic space, due to the following result that is proved in \cite[Theorem 7.2.2]{AGS05}.

\begin{theorem}
For any couple $(\mu,\nu)\in P_2(\Omega)^2$, there exists a (constant speed) geodesic $(\mu_t)_{0\le t\le 1}\in C^0([0,1];P_2(\Omega))$ connecting $\mu$ and $\nu$. Taking an optimal coupling $\gamma\in\Pi(\mu,\nu)$, a constant speed geodesic is constructed as
\begin{align*}
  \mu_t = ((1-t)\pi_1 + t\pi_2)_{\#}\gamma, \quad t\in[0,1],
\end{align*}
where $\pi_1$ and $\pi_2$ are the projection operators given by $\pi_1(x,y)=x$, $\pi_2(x,y)=y$.
\end{theorem}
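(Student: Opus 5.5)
The plan is a direct construction. Take an optimal coupling $\gamma\in\Pi(\mu,\nu)$; its existence is stated above (the Kantorovich problem has an optimal solution, \cite[Theorem 6.1.4]{AGS05}). For $t\in[0,1]$ put $\pi_t:=(1-t)\pi_1+t\pi_2$ and $\mu_t:=(\pi_t)_\#\gamma$. Since $\Omega$ is convex, $\pi_t$ maps $\Omega\times\Omega$ into $\Omega$, so $\mu_t\in P_2(\Omega)$. Moreover $\pi_0=\pi_1$ and $\pi_1=\pi_2$ (as maps), so $\mu_0=\mu$ and $\mu_1=\nu$ by the marginal conditions. By the preceding Remark, it suffices to show the constant-speed identity \eqref{eq:constant_speed_geodesic},
\begin{align*}
  W_2(\mu_s,\mu_t)=(t-s)W_2(\mu,\nu)\quad\mbox{for } 0\le s\le t\le 1 .
\end{align*}
This identity also gives continuity of $t\mapsto\mu_t$ in $W_2$, hence $(\mu_t)\in C^0([0,1];P_2(\Omega))$.

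For the upper bound, use the coupling $\gamma_{s,t}:=(\pi_s,\pi_t)_\#\gamma\in\Pi(\mu_s,\mu_t)$. Then
\begin{align*}
  W_2^2(\mu_s,\mu_t)\le\int_{\Omega\times\Omega}|\pi_s(x,y)-\pi_t(x,y)|^2\,d\gamma(x,y)
  =(t-s)^2\int_{\Omega\times\Omega}|x-y|^2\,d\gamma(x,y)=(t-s)^2W_2^2(\mu,\nu),
\end{align*}
where the last equality uses the optimality of $\gamma$.

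For the lower bound, apply the triangle inequality for $W_2$, which is a metric on $P_2(\Omega)$:
\begin{align*}
  W_2(\mu,\nu)\le W_2(\mu,\mu_s)+W_2(\mu_s,\mu_t)+W_2(\mu_t,\nu).
\end{align*}
The upper bound already gives $W_2(\mu,\mu_s)\le sW_2(\mu,\nu)$ and $W_2(\mu_t,\nu)\le(1-t)W_2(\mu,\nu)$. Substituting these yields $W_2(\mu_s,\mu_t)\ge(t-s)W_2(\mu,\nu)$, so equality holds.

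The delicate step is this lower bound. It relies on the triangle inequality for $W_2$, which is usually proved with the gluing lemma (disintegration of couplings). I would take that metric property as known from \cite{AGS05} rather than reprove it. The rest of the argument consists of push-forward bookkeeping and the convexity of $\Omega$, which keeps the interpolated measures supported in $\Omega$.
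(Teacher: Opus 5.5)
Your proof is correct and is the standard argument behind the paper's citation of \cite[Theorem 7.2.2]{AGS05}; the paper gives no proof of its own. The coupling $(\pi_s,\pi_t)_\#\gamma$ gives $W_2(\mu_s,\mu_t)\le(t-s)W_2(\mu,\nu)$, and the triangle inequality upgrades this to equality. The only blemish is notational: once you set $\pi_t=(1-t)\pi_1+t\pi_2$, the symbol $\pi_1$ means two different things, so ``$\pi_1=\pi_2$'' reads as a contradiction. Use a separate symbol, say $e_t$, for the interpolation map.
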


Observe that a geodesic in $P_2(\Omega)^N$ is the usual product of geodesics in $P_2(\Omega)$. 

It is possible to view $(P_2(\Omega),W_2)$ as a kind of Riemannian manifold with the tangent space \cite[Def.~3.31]{AmGi12}
\begin{align*}
  T_\mu P_2(\Omega) = \overline{\{\na\phi:\phi\in 
  C_0^\infty(\Omega)\}}^{L^2(\mu)}.
\end{align*}
To understand this, we need the notion of absolutely continuous curve. A curve $\eta:[0,T]\to X$ of a metric space $(X,d)$ belongs to the class of {\em absolutely continuous curves} $AC^2([0,T];X)$ if there exists $f\in L^2(0,T;\R^+)$ such that
\begin{align}\label{2.f}
  d(\eta(s),\eta(t)) \le \int_s^t f(\tau)d\tau
  \quad\mbox{for all }0\le s\le t\le T.
\end{align}
For any $\eta\in AC^2([0,T];X)$, the limit
\begin{align*}
  |\eta'(t)| := \lim_{s\to t}\frac{d(\eta(s),\eta(t))}{|s-t|} 
\end{align*}
exists almost everywhere and is minimal in the sense $|\eta'(t)|\le f(t)$ for almost all $t\in[0,T]$ and for any $f$ satisfying \eqref{2.f} \cite[Theorem 1.1.2]{AGS05}. The function $|\eta'|$ is called the {\em metric derivative} of $\eta$.

Absolutely continuous curves satisfy a continuity equation. This is formalized in the following fundamental theorem \cite[Theorem 8.3.1]{AGS05}.

\begin{theorem}\label{thm.fund}
Let $(\mu_t)$ be an absolutely continuous curve in $(P_2(\Omega),W_2)$. Then there exists a unique Borel vector field $(t,x)\mapsto v_t(x):=v(t,x)$ such that $v_t\in T_{\mu_t}P_2(\Omega)$ for a.e.\ $t\in[0,T]$ and
\begin{align*}
  \pa_t\mu_t + \diver(\mu_t v_t) = 0\quad\mbox{in the weak sense},
  \quad \|v_t\|_{L^2(\mu_t)} = |\mu_t'|\quad\mbox{for a.e. } t\in[0,T].
\end{align*}
\end{theorem}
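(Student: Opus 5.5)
The approach is the standard route of \cite[Theorem 8.3.1]{AGS05}: build a velocity field out of the metric derivative by duality, then select the minimal one in the tangent space. Fix an absolutely continuous curve $(\mu_t)$ with metric derivative $|\mu_t'|\in L^2(0,T)$. For $\phi\in C_c^\infty(\Omega)$, take an optimal plan $\gamma_{s,t}\in\Pi(\mu_s,\mu_t)$ and write
\[
  \int\phi\,d\mu_t-\int\phi\,d\mu_s=\int(\phi(y)-\phi(x))\,d\gamma_{s,t}(x,y).
\]
A first-order Taylor expansion, together with Cauchy--Schwarz against $W_2(\mu_s,\mu_t)$, gives
\[
  \Big|\int\phi\,d\mu_t-\int\phi\,d\mu_s\Big|\le \int_s^t|\mu_r'|\,dr\,\Big(\int|\na\phi|^2d\mu_\cdot\Big)^{1/2}+o(|t-s|).
\]
Hence $t\mapsto\int\phi\,d\mu_t$ is absolutely continuous, and for a.e.\ $t$
\[
  \Big|\frac{d}{dt}\int\phi\,d\mu_t\Big|\le|\mu_t'|\,\|\na\phi\|_{L^2(\mu_t)}.
\]
To handle the ``a.e.'' uniformly in $\phi$, I will work with a countable dense family of test functions. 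I also need the upper semicontinuity of $(x,y)\mapsto|\na\phi|$ along plans as $s\to t$, which follows from weak convergence $\gamma_{s,t}\to(I\times I)_\#\mu_t$ since the curve is continuous in $W_2$.

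To define the field, I use the space--time measure $\bar\mu=\int\mu_t\,dt$ on $(0,T)\times\Omega$. The linear functional
\[
  L(\na\phi)=-\int_0^T\int\pa_t\phi\,d\mu_t\,dt,\qquad \phi\in C_c^\infty((0,T)\times\Omega),
\]
is well defined on gradients and bounded in $L^2(\bar\mu)$ by $\|\,|\mu'|\,\|_{L^2}$, thanks to the estimate above. It therefore extends to the closure $V$ of $\{\na\phi\}$ in $L^2(\bar\mu;\R^d)$. By Riesz representation there is a unique $v\in V$ with $L(\na\phi)=\int\!\!\int v\cdot\na\phi\,d\mu_t\,dt$, which is exactly the weak form of $\pa_t\mu_t+\diver(\mu_tv_t)=0$.

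Next I need to show that $v_t\in T_{\mu_t}P_2(\Omega)$ for a.e.\ $t$. For this I disintegrate and check that $V$ localizes in time, i.e.\ $V=\{w:\ w_t\in T_{\mu_t}P_2\text{ a.e.}\}$; approximating by products $\eta(t)\na\phi(x)$ with a countable dense set suffices. Localizing the bound with test functions $\eta(t)\phi(x)$ yields $\|v_t\|_{L^2(\mu_t)}\le|\mu_t'|$ for a.e.\ $t$.

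For the reverse inequality, I use that any solution $w$ of the continuity equation with $\int\|w_t\|_{L^2(\mu_t)}^2\,dt<\infty$ satisfies $W_2(\mu_s,\mu_t)\le\int_s^t\|w_r\|_{L^2(\mu_r)}\,dr$. This is the Benamou--Brenier-type bound, proved by regularizing $\mu_t$ by convolution (which keeps the $L^2$ norm of the velocity nonincreasing by Jensen), solving the smooth flow ODE, and using the flow map as a coupling. Minimality of the metric derivative then gives $|\mu_t'|\le\|v_t\|_{L^2(\mu_t)}$.

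Uniqueness comes from the tangent condition. If two fields solve the equation, their difference $w$ satisfies $\diver(\mu_tw_t)=0$, so $w_t$ is $L^2(\mu_t)$-orthogonal to all gradients, hence to $T_{\mu_t}P_2$. Since $w_t$ itself lies in $T_{\mu_t}P_2$, it follows that $w_t=0$.

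I expect the regularization step behind $W_2\le\int\|v\|$ to be the main obstacle, since it requires compactness of the smoothed couplings and control near $\pa\Omega$. My first attempt would be to extend by zero to $\R^d$, mollify, and rely on the convexity of $\Omega$ when passing to the limit.
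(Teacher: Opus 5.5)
Your proposal is correct and is essentially the argument of \cite[Theorem 8.3.1]{AGS05}: Riesz representation of the time-derivative functional on the closure of gradients in $L^2(\int\mu_t\,dt)$, localization in time, the regularization bound $W_2(\mu_s,\mu_t)\le\int_s^t\|v_r\|_{L^2(\mu_r)}dr$, and the tangent-space characterization of the minimal field; the paper gives no proof of its own and simply cites that theorem. The boundary step you expect to be the main obstacle is harmless: $W_2$ uses the Euclidean cost, so $P_2(\Omega)$ embeds isometrically into $P_2(\R^d)$, and once the weak formulation and $T_{\mu_t}P_2(\Omega)$ are understood with restrictions of $C_c^\infty(\R^d)$, the extension by zero solves the continuity equation on $\R^d$ and the mollification argument applies verbatim, with no use of the convexity of $\Omega$ (this reading is needed for the statement anyway, since for a Dirac mass sliding along $\pa\Omega$ interior test functions would give $T_{\mu_t}P_2(\Omega)=\{0\}$).
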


This theorem shows that if we want to move from a measure $\mu_t\in P_2(\Omega)$ to another measure in an absolute continuous way, we need to choose a vector field $v_t$ belonging to $T_{\mu_t}P_2(\Omega)$. For this reason, this space is called a tangent space in analogy with finite-dimensional manifold theory. 

We introduce the $L^2(\mu_t)$ inner product 
\begin{align*}
  g_\mu(v,w) := \int_\Omega v w d\mu
  \quad\mbox{for }v,\, w\in T_\mu P_2(\Omega).
\end{align*}
This definition is consistent with the Wasserstein metric. Indeed, let $(\mu_t)_{0\le t\le 1}$ be a constant speed geodesic and $(v_t)$ be the associated vector field from Theorem \ref{thm.fund}. We infer from \eqref{eq:constant_speed_geodesic} that $W_2(\mu_t,\mu_s)=(t-s)W_2(\mu_1,\mu_0)$ for $s\le t$ and hence
\begin{align*}
  |\mu'_t| = \lim_{s\to t}\frac{W_2(\mu_t,\mu_s)}{|t-s|}
  = W_2(\mu_1,\mu_0),
\end{align*}
showing for $v_t\in T_{\mu_t}P_2(\Omega)$ that
\begin{align*}
  \int_0^1 g_{\mu_t}(v_t,v_t)dt = \int_0^1\|v_t\|_{L^2(\mu_t)}^2 dt
  = \int_0^1|\mu'_t|^2 dt = W_2^2(\mu_1,\mu_0),
\end{align*}
where we used Theorem \ref{thm.fund} to relate the $L^2(\mu_t)$ norm of $v_t$ with the metric derivative of $\mu_t$. 

We are now in the position to make precise the notion of Wasserstein gradient flow for \eqref{1.eq}. Let an energy of the form
\begin{align*}
  \mathcal{E}(u) = \int_\Omega f(x,u)dx
\end{align*}
be given, where $u=(u_1,\ldots,u_N)$ is a vector-valued measure satisfying $u_i dx=du_i$ (we identify the measure and the measurable function). We define the inner product on the product tangent space by
\begin{align*}
  G_u(v,w) = \sum_{i=1}^N g_{u_i}(v_i,w_i)
  \quad\mbox{for }v=(v_i),\, w=(w_i)\in\prod_{i=1}^NT_{u_i}P_2(\Omega).
\end{align*}
By Theorem \ref{thm.fund}, its variation through a geodesic becomes
\begin{align*}
  d\mathcal{E}(u)(v) 
  &= \sum_{i=1}^N \int_\Omega \frac{\partial f }{\partial u_i}(x,u) 
  \partial_t u_i dx
  = \sum_{i=1}^N\int_\Omega \nabla 
  \frac{\partial f}{\partial u_i}(x,u)\cdot  v_i du_i.
\end{align*}
Defining the geometric gradient $\na\mathcal{E}$ by $d\mathcal{E}(u)(v)=G_u(v,\na\mathcal{E})$, we can identify $\na\mathcal{E}$ with $\na(\pa f/\pa u)$. These calculations are formal, since $\pa_t u_i + \diver(u_iv_i) = 0$ holds in the weak sense only. For a rigorous definition of a gradient flow in metric spaces, we refer to \cite[Chapter 1]{AGS05}, where the notion of an upper gradient is developed (to substitute gradient norms). This approach lacks a geometric interpretation, which is the reason why Otto's formalism is often preferred to understand the gradient flow structure. In fact, Otto's calculus is based on a different metric \cite[(18.1)]{ABS21}, and the gradient tangent vectors $v_i=\na\phi_i$ are coupled to the tangent vectors $s_i$ via $\diver(u_i\na\phi_i) = s_i$, which allows us to recover formulation (1), where $\pa\mathcal{E}/\pa u_i$ is identified with $\pa f/\pa u_i$; see, e.g., \cite[Lec.~18]{ABS21} for details.

For later use, we introduce the notion of $\lambda$-geodesically convexity: A functional $\mathcal{E}:D(\mathcal{E})\to\R$ on the domain $D(\mathcal{E})\subset P_2(\Omega)$ is {\em $\lambda$-geodesically convex} for $\lambda\in\R$ if it holds for all geodesics $(u_t)_{0\le t\le 1}$ that
\begin{align*}
  \mathcal{E}(u_t)\le (1-t)\mathcal{E}(u_0) + t\mathcal{E}(u_1)
  - \frac{\lambda}{2}t(1-t)W_2^2(u_0,u_1).
\end{align*}
The functional $\mathcal{E}$ is said to be {\em (geodesically) semi-convex} if there exists $\lambda\in\R$ such that it is $\lambda$-geodesically convex.

%%%%%%%%%%%%%%%%%%%%%%%%%%%%%%%%%

\section{Parabolic Busenberg--Travis model}

The Busenberg--Travis system \eqref{1.BT} possesses two Lyapunov functionals along the solution $u$ to \eqref{1.BT},
\begin{align}\label{3.EH}
  \mathcal{E}(u) = \frac12\sum_{i,j=1}^N\int_\Omega a_{ij}u_iu_j dx,
  \quad \mathcal{H}(u) = \sum_{i=1}^N\int_\Omega u_i(\log u_i-1)dx.
\end{align}
The functional $\mathcal{E}$ can be interpreted as the potential energy and $\mathcal{H}$ as the Boltzmann--Shannon entropy of an underlying fluiddynamical approximation \cite{CCDJ25}. Inserting the variational derivative $\delta\mathcal{E}/\delta u_i = \sum_{j=1}^N a_{ij}u_j$ into \eqref{1.eq} leads to the Busenberg--Travis equations \eqref{1.BT}, while the gradient flow associated to $\mathcal{H}$ corresponds to the heat equation. 

In this section, we show that the energy $\mathcal{E}$ is not semi-convex in the geodesic sense, prove the existence of weak solutions to the parabolic Busenberg--Travis system \eqref{1.BT}, using bounds from the entropy, and discuss a fourth-order variant of \eqref{1.BT}. 

\subsection{Convexity of energies}
%\label{sec.convex}

We show that the energy $\mathcal{E}$ in \eqref{3.EH} is not (geodesically) semi-convex in $L^2(\Omega)^N$, such that we cannot apply the method developed in \cite{AGS05} to prove that the chain rule is satisfied by the associated gradient flow. The lack of semi-convexity is not specific to our choice of energy; see \cite[Prop.~2.4]{BMZ23} for a more general result. 

\begin{lemma}
If $a_{ij}\neq 0$ for some $i\neq j$, then the energy $\mathcal{E}$,  defined in \eqref{3.EH}, is not semi-convex on $P_2(\Omega)^N$.
\end{lemma}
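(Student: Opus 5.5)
Our plan is to exhibit, for every $\lambda\in\R$, a geodesic along which $\mathcal{E}$ violates $\lambda$-convexity. Since a geodesic in $P_2(\Omega)^N$ is a product of geodesics in $P_2(\Omega)$, we may move the components independently. Fix $i\neq j$ with $a_{ij}=a_{ji}\neq0$; the matrix can be taken symmetric, since only its symmetric part enters $\mathcal{E}$. Keep all components $u_k$, $k\neq i,j$, constant in time and equal to some fixed smooth density; their contribution then does not depend on $t$. We will choose $u_i$ and $u_j$ so that the diagonal terms $\frac12 a_{ii}\int u_i^2$ and $\frac12 a_{jj}\int u_j^2$ are harmless, while the cross term $a_{ij}\int u_iu_j\,dx$ blows up relative to $W_2^2$.

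\textbf{Construction.} Take $\eps>0$ small and two points $x_0,x_1$ in the interior of $\Omega$. Let $u_j=\rho_\eps$ be stationary in time, where $\rho_\eps$ is a normalized bump of width $\eps$ centered at $x_m=(x_0+x_1)/2$. Let $u_i(t)$ be the translation geodesic $u_i(t)=\rho_\eps(\cdot-z_t)$ with $z_t=(1-t)y_0+ty_1$, where $y_0,y_1$ are chosen so that $z_{1/2}=x_m$. Translations are constant speed geodesics, with $W_2(u_i(0),u_i(1))=|y_1-y_0|=:\delta$, and the diagonal terms are constant in $t$. The only $t$-dependent term is
\begin{align*}
  g(t)=a_{ij}\int_\Omega \rho_\eps(x-z_t)\rho_\eps(x-x_m)\,dx
  =a_{ij}\,\eps^{-d}\,\kappa\big((z_t-x_m)/\eps\big),
\end{align*}
where $\kappa=\rho*\rho$ for the unscaled bump $\rho$.

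\textbf{Sign of $a_{ij}$.} If $a_{ij}>0$, take $\delta=\eps\cdot C$ with $C$ large enough that $\kappa$ vanishes at the endpoints. Then $g(0)=g(1)=0$ while $g(1/2)=a_{ij}\eps^{-d}\kappa(0)$. The $\lambda$-convexity inequality at $t=1/2$ would require
\[
  a_{ij}\eps^{-d}\kappa(0)\le-\tfrac{\lambda}{8}C^2\eps^2,
\]
which fails as $\eps\to0$ for any fixed $\lambda$. If $a_{ij}<0$, use the reverse configuration: the endpoints overlap and the midpoint is separated. Take $u_i(0)=u_i(1)$ centered at $x_m$, which is not a translation geodesic. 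Instead, let $u_j$ be centered at $x_m$ and translate $u_i$ from $x_m-\delta e$ to $x_m+\delta e$ with $\delta$ comparable to $\eps$. Then $g$ is a negative multiple of $\eps^{-d}\kappa(\cdot)$, so $g(1/2)$ is the most negative value and this configuration does not help. A cleaner choice for $a_{ij}<0$ is to move both $u_i$ and $u_j$ simultaneously by translation in opposite directions, so that they coincide at $t=0$ and $t=1$ but are separated at $t=1/2$. This is impossible for straight translations, which are monotone in time, so use second order instead. Evaluate $\frac{d^2}{dt^2}g$ along translations: for $u_i$ translated at velocity $v$ and $u_j$ fixed, $g''(t)=a_{ij}\eps^{-d-2}\,v^TD^2\kappa\,v$. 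Choosing the time $t$ where $D^2\kappa$ is positive definite in direction $v$, i.e. in the tail region of $\kappa$, gives $g''\to-\infty$ relative to $|v|^2$ as $\eps\to0$ when $a_{ij}<0$. For $a_{ij}>0$, the analogous choice is the center, where $D^2\kappa(0)$ is negative definite.

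\textbf{Unified argument.} This suggests a single argument for both signs. For a constant speed geodesic, $\lambda$-convexity of a $C^2$ function $t\mapsto\mathcal{E}(u_t)$ is equivalent to $\frac{d^2}{dt^2}\mathcal{E}(u_t)\ge\lambda W_2^2(u_0,u_1)$. Choose translation geodesics on a short time window near a point where $a_{ij}\,v^TD^2\kappa(w)\,v<0$. Such a point always exists: $\kappa$ is a nonconstant compactly supported smooth function, so $\partial_v^2\kappa$ takes both signs. Rescaling by $\eps$ makes the second derivative of order $\eps^{-d-2}|v|^2$, while the right-hand side is $\lambda|v|^2$. This contradiction holds for every $\lambda$, which proves the lemma.

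\textbf{Main obstacle.} The delicate points are, first, that the translates must stay inside the compact convex $\Omega$, which holds for small $\eps$ by taking interior points, and second, justifying that translations of a single component, with the other components fixed, are geodesics in the product space. The latter follows because translations are optimal with $\na\phi$ constant.
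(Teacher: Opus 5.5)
Your proof is correct, and its final ``unified'' form takes a genuinely different route from the paper's. The paper runs a single global midpoint test. Two uniform densities on balls of radius $r2^{-n}$ at distance $r$ are swapped by translation. The cross term then vanishes at $t=0,1$ and is of order $|B_{r2^{-n}}|^{-1}$ at $t=\frac12$, while $W_2$ stays fixed. The diagonal terms are invariant under translation, and the remaining species sit outside the cylinder swept by the two balls. Your first construction for $a_{ij}>0$ is essentially that argument.

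A midpoint collision, however, only contradicts $\lambda$-convexity when the cross coefficient is positive. For $a_{ij}<0$ it makes the energy dip at $t=\frac12$, which is compatible with convexity, and the paper's proof does not treat this case. Your second-order test covers both signs. In it, $\lambda$-convexity forces $\frac{d^2}{dt^2}\mathcal{E}(u_t)\ge\lambda W_2^2(u_0,u_1)$ along every (sub)geodesic, while the cross term contributes $a_{ij}\eps^{-d-2}V^TD^2\kappa\,V$. This works because $s\mapsto\kappa(sv)$ is smooth, compactly supported and positive at $s=0$, so its second derivative takes both strictly positive and strictly negative values. The price is smooth bumps and a $C^2$ computation instead of the paper's elementary midpoint calculation with indicator functions; the gain is the full sign range.

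A few things need fixing in the write-up.
\begin{enumerate}
\item Your claim that the components $u_k$, $k\neq i,j$, contribute a $t$-independent amount is false for a generic fixed smooth density, since $a_{ik}\int_\Omega u_i(t)u_k\,dx$ changes as $u_i$ moves. Either place $u_k$ away from the region swept by $u_i$, as the paper does, or observe that for $u_k\in C^2$ the second $t$-derivative of these terms is $O(|V|^2)$ uniformly in $\eps$. That is negligible against $\eps^{-d-2}|V|^2$.
\item Replacing $A$ by its symmetric part requires $(a_{ij}+a_{ji})/2\neq 0$. This is the paper's implicit symmetry assumption, not a consequence of $a_{ij}\neq0$: with $a_{12}=-a_{21}$ and $a_{ii}\ge0$ the energy is $\frac12\sum_i a_{ii}\int_\Omega u_i^2\,dx$, which is geodesically convex.
\item Drop the abandoned attempts in the ``Sign of $a_{ij}$'' paragraph. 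The whole proof is to apply the midpoint inequality to one short translation geodesic along which the rescaled relative position stays in a region where $a_{ij}\pa_v^2\kappa\le -c<0$.
\end{enumerate}
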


\begin{proof}
Since our setting is slightly different from that one in \cite[Prop.~2.4]{BMZ23}, we give an independent proof. Without loss of generality, we suppose that $i=1$ and $j=2$. Let $n\in\N$, $x_k\in\operatorname{int}(\Omega)$ for $k=1,\ldots,N$ with $x_1\neq x_2$. Set $r=|x_1-x_2|$. We define the measures $u_k^n$ for $k=1,2$ such that $u_k^n$ is constant on the ball $B_{r/2^n}(x_k)$ around $x_k$ with radius $r/2^n$ and $u_k^n$ for $k=3,\ldots,N$ is constant on $B_s(x_k)$ for $0<s<r$, where $x_k$ is chosen outside the cylinder of radius $r$ with axis $\overline{x_1x_2}$. Such a construction exists by taking $r>0$ and $s>0$ sufficiently small, since $\operatorname{int}(\Omega)$ is open. 

Let $u^n(t)=(u_1^n,\ldots,u_N^n)(t)$ be the geodesic of $P_2(\Omega)^N$ such that $u_1^n(t)$ links $u_1^n$ to $u_2^n$, $u_2^n(t)$ links $u_2^n$ to $u_1^n$, and the other components are not moving (i.e., they are constant for $t\in[0,1]$). The components $u_k^n(t)$ are geodesics in $P_2(\Omega)$; they are balls of radius $r/2^n$ whose center is moving with constant speed. Denote by $\mathcal{E}_{12}$ the correlation energy between species 1 and 2, $\mathcal{E}_{12}(u)=\frac12\int_\Omega u_1u_2 dx$. A computation shows that
\begin{align*}
  \mathcal{E}_{12}(u^n(0)) = \mathcal{E}_{12}(u^n(1)) = 0, \quad
  \mathcal{E}_{12}(u^n(\tfrac12)) 
  = \frac{1}{\mbox{meas}(B_{r/2^n}(0))}
  \to +\infty \mbox{ as }n\to\infty.
\end{align*}
Suppose that $\mathcal{E}$ is $\lambda$-geodesically convex for some $\lambda\in\R$. Then
\begin{align*}
  \mathcal{E}_{12}(u^n(\tfrac12))
  \le \frac12\big(\mathcal{E}(u^n(0)) + \mathcal{E}(u^n(1))\big)
  - \frac{\lambda}{8}r^2 = -\frac{\lambda}{8}r^2.
\end{align*}
This leads to a contradiction, since the left-hand side diverges as $n\to\infty$. 
\end{proof}

To gain some physical intuition for the lack of semi-convexity of the energy, we consider a nonlocal approximation of the functional $\mathcal{E}_{12}$,
\begin{align*}
  \mathcal{E}_{12}^\eps(u) = \int_\Omega\int_\Omega  
  K_\eps(|x_1-x_2|)du_1(x_1)du_2(x_2),
\end{align*}
where the kernel $K_\eps:(0,\infty)\to\R$ is a convex approximation of the Dirac delta distribution. A rigorous approach for such an approximation can be found in, e.g., \cite{CEW24}. Let $(u_t)_{0\le t\le 1}$ be a geodesic linking $(u_1,u_2)$ to some $(v_1,v_2)$ with the optimal coupling $(\gamma_1,\gamma_2)$. It follows that
\begin{align*}
  \mathcal{E}_{12}^\eps(u_t) = \int_\Omega\int_\Omega
  K_\eps\big(\big|(1-t)(x_1-x_2)+t(y_1-y_2)\big|\big)
  d\gamma_1(x_1,y_1)d\gamma_2(x_2,y_2). 
\end{align*}
This expression loses its convexity if the trajectories $x_1(t)=(1-t)x_1+ty_1$ and $x_2(t)=(1-t)x_2+ty_2$ collide in the sense that $x_1(t)=x_2(t)$ for some $t\in[0,1]$. If the trajectories originate from (or terminate at) the same point and separate (or collide), the convexity is conserved. 

Collisions do not happen for the porous-medium internal energy, i.e.\ when $u_1=u_2$ and $v_1=v_2$, since the support of the optimal coupling is monotone in the sense \cite[Chap.~1]{San15}
\begin{align*}
  (y_2-y_1)\cdot(x_2-x_1)\ge 0 \quad\mbox{for all }
  (x_1,y_1),\,(x_2,y_2)\in\operatorname{supp}\gamma_1.
\end{align*}
Indeed, there exists $t\in(0,1)$ such that
\begin{align*}
  (1-t)(x_1-x_2) + t(y_1-y_2) = 0\quad\mbox{and}\quad
  (y_2-y_1)\cdot(x_2-x_1)\ge 0
\end{align*}
if and only if $x_1=x_2$ and $y_1=y_2$. Since the product metric does not account for collisions, our energy is singular. 

Instead of relying on the semi-convexity property, we adopt the flow interchange technique, introduced in \cite{MMS09} (before used in \cite{Ott98}). This approach is based on analyzing the dissipation of the energy along an auxiliary gradient flow (here associated with the entropy $\mathcal{H}$) and using the evolution variational inequality to rigorously justify the chain rule. This, in turn, leads to an entropy production inequality of the type derived in \cite[Lemma 2.10]{CaSa24}. The technique was applied, for instance, to aggregation--diffusion systems \cite{CEFF24}. 

%%%%%%%%%%%%%%%%%%%%%%%%%%%%%%%%%

\subsection{Existence analysis}\label{sec.ex}

Recall definition \eqref{3.EH} of the energy $\mathcal{E}$ and entropy $\mathcal{H}$. We prove the following theorem. 

\begin{theorem}\label{thm.ex}
Let $A=(a_{ij})$ be positive definite, $T>0$, $u^0\in L^1(\Omega)^N$ such that $\mathcal{E}(u^0)<\infty$, $\mathcal{H}(u^0)<\infty$. Then there exists a weak solution $u\in C^0([0,T];P_2(\Omega)^N)$ to \eqref{1.BT} satisfying $u\in L^\infty(0,T;L^2(\Omega)^N)\cap L^2(0,T;H^1(\Omega)^N)$ and for all $i=1,\ldots,N$ and $\phi_i\in C^\infty(\Omega)$,
\begin{align*}
  -\int_\Omega(u_i(T)-u_i^0)\phi_i dx
  + \sum_{j=1}^N\int_\Omega u_i a_{ij}\na u_j\cdot\na\phi_i dx = 0,
  \quad i=1,\ldots,N.
\end{align*}
\end{theorem}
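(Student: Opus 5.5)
We construct the solution as the limit of the JKO scheme with the flow interchange technique announced above. Fix $\tau=T/K$ and set $u^0_\tau=u^0$. Recursively, let
\begin{align*}
  u^{k}_\tau \in \argmin_{v\in P_2(\Omega)^N}\Big\{\frac{1}{2\tau}W_2^2(v,u^{k-1}_\tau)+\mathcal{E}(v)\Big\}.
\end{align*}
Existence of a minimizer follows from the direct method. Since $A$ is positive definite, $\mathcal{E}(v)\ge c\|v\|_{L^2}^2$, so minimizing sequences are bounded in $L^2$. Moreover, $\mathcal{E}$ is convex in the linear sense and hence weakly lower semicontinuous in $L^2$. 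Finally, $W_2$ is lower semicontinuous under narrow convergence, and $P_2(\Omega)$ is narrowly compact because $\Omega$ is compact. We then let $u_\tau$ be the piecewise constant interpolant. Comparing with $v=u^{k-1}_\tau$ gives the classical estimates
\begin{align*}
  \mathcal{E}(u^k_\tau)+\frac{1}{2\tau}\sum_{j\le k}W_2^2(u^j_\tau,u^{j-1}_\tau)\le \mathcal{E}(u^0).
\end{align*}
These give $u_\tau\in L^\infty(0,T;L^2)$ and the approximate Hölder-$1/2$ bound $W_2(u_\tau(t),u_\tau(s))\le C(|t-s|^{1/2}+\tau^{1/2})$.

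The key step is a uniform $L^2(0,T;H^1)$ bound, obtained by flow interchange with $\mathcal{H}$. Let $S^{\mathcal H}_s$ be the heat semigroup with Neumann conditions, which is the Wasserstein gradient flow of $\mathcal{H}$. Since $\Omega$ is convex, $\mathcal{H}$ is $0$-geodesically convex (displacement convex), so $S_s$ satisfies the EVI. Comparing $u^k_\tau$ with the competitor $S_s u^k_\tau$ in the minimization, and using the EVI,
\begin{align*}
  \frac{1}{2}\frac{d}{ds}W_2^2(S_sv,w)\le \mathcal{H}(w)-\mathcal{H}(S_sv),
\end{align*}
we obtain
\begin{align*}
  \tau\,\Big(-\frac{d}{ds}\Big|_{s=0^+}\mathcal{E}(S_su^k_\tau)\Big)\le \mathcal{H}(u^{k-1}_\tau)-\mathcal{H}(u^k_\tau).
\end{align*}
For smooth, positive heat-flow data,
\begin{align*}
  -\frac{d}{ds}\mathcal{E}(S_sv)=\sum_{i,j}a_{ij}\int_\Omega\na v_i\cdot\na v_j\,dx\ge c\|\na v\|_{L^2}^2,
\end{align*}
again by positive definiteness. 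We justify the value at $s=0^+$ with a $\liminf$ argument. Heat-flow regularization makes $S_sv$ smooth for $s>0$, and the $L^2$ lower semicontinuity of the Dirichlet form gives the bound for $v=u^k_\tau$ itself. Summing over $k$ yields
\begin{align*}
  c\int_0^T\|\na u_\tau\|_{L^2}^2\,dt\le \mathcal{H}(u^0)-\inf\mathcal{H}.
\end{align*}
The right-hand side is finite because $\Omega$ is bounded.

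Next we derive the discrete Euler--Lagrange equation. We perturb $u^k_{\tau,i}$ by a push-forward $(I+\eps\na\phi_i)_{\#}u^k_{\tau,i}$ with $\phi_i\in C^\infty(\Omega)$, $\na\phi_i\cdot\nu=0$, and use the change of variables formula \eqref{2.cov} together with the Kantorovich potential representation \eqref{2.W2Kant}. This gives
\begin{align*}
  \Big|\int_\Omega(u^k_{\tau,i}-u^{k-1}_{\tau,i})\phi_i\,dx+\tau\sum_j\int_\Omega u^k_{\tau,i}a_{ij}\na u^k_{\tau,j}\cdot\na\phi_i\,dx\Big|\le C\|D^2\phi_i\|_\infty W_2^2(u^k_\tau,u^{k-1}_\tau).
\end{align*}
The derivative of $\mathcal{E}$ along the push-forward is computed using the $H^1$ regularity of $u^k_\tau$, which the flow interchange step supplies for each $k$. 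Summing over $k$, the error term is $O(\tau)$ by the energy estimate.

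Finally, we pass to the limit $\tau\to0$. A refined Ascoli--Arzelà argument (the AGS approach) gives $u_\tau\to u$ in $C^0([0,T];P_2)$ narrowly, uniformly in $t$. To identify the product $u_i\na u_j$ we need strong compactness. For this we would use the $L^2(0,T;H^1)$ bound, the $W_2$ time equicontinuity, and an extended Aubin--Lions lemma of Rossi--Savaré type. This gives $u_\tau\to u$ strongly in $L^2(0,T;L^2)$, while $\na u_\tau\rightharpoonup\na u$ weakly in $L^2$, so the product passes to the limit. I expect the main obstacle to be the flow-interchange step: rigorously justifying the dissipation at $s=0^+$ for nonsmooth minimizers, and ensuring $\mathcal{E}(S_sv)\le\mathcal{E}(v)$ so that the competitor comparison is valid.
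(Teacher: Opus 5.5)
Your proof is correct and has the same overall architecture as the paper's. Both use the JKO scheme, energy and H\"older estimates, and an entropy-based $L^2(0,T;H^1)$ bound. Both then use the push-forward variation to get the discrete weak form with an $O(W_2^2)$ remainder, Rossi--Savar\'e compactness upgraded to strong $L^2(0,T;L^2)$ via the $L^\infty(0,T;L^2)$ bound, and weak--strong convergence of $u_i\nabla u_j$. The genuine difference is how the gradient bound is obtained.

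\textbf{How the paper gets the gradient bound.} The paper does not perturb the minimizer by the heat flow. It takes the geodesic from $u^{k+1}$ back to $u^k$ and computes $\frac{d}{dt}\mathcal{H}$ at $t=0$ as $\int_\Omega u^{k+1}\Delta\phi^{k+1\to k}\,dx$. It integrates this by parts after arguing that $u^{k+1}$ is Lipschitz continuous. It then combines the above-tangent inequality for the displacement convex $\mathcal{H}$ with the optimality condition of Lemma~\ref{lem.kant}, namely $\nabla\phi_i^{k+1\to k}=-\tau_k\nabla(Au^{k+1})_i$ on the support. This yields $\tau_k\int_\Omega\nabla u^{k+1}:A\nabla u^{k+1}\,dx\le\mathcal{H}(u^k)-\mathcal{H}(u^{k+1})$.

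\textbf{How you get it.} You reach the same inequality by the classical Matthes--McCann--Savar\'e flow interchange. You test the minimality of $u^k$ against $S_su^k$ and use the EVI (with $\lambda=0$) of the Neumann heat flow for the Wasserstein term. You then use the $L^2$-semigroup identity $-\frac{d}{ds}\mathcal{E}(S_sv)=\int_\Omega\nabla S_sv:A\nabla S_sv\,dx$, and handle $s\to0^+$ by lower semicontinuity of the Dirichlet form. That step works because this quantity is nonincreasing in $s$, or alternatively by averaging over $(0,s)$.

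\textbf{What each route costs.} Your route needs neither regularity of the minimizers nor the Euler--Lagrange equation at this stage. As a by-product it gives $u^k\in H^1(\Omega)^N$ for every $k$, which is exactly what you need later to differentiate $\mathcal{E}$ along push-forwards. The price is invoking the EVI for the heat flow on a convex domain from \cite{AGS05}. The paper's route instead requires Lipschitz continuity of $u^{k+1}$, obtained from Lemma~\ref{lem.kant} and invertibility of $A$, in order to integrate by parts.

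Your second announced worry is unnecessary. The comparison with $S_su^k$ holds for any admissible competitor, so you do not need $\mathcal{E}(S_sv)\le\mathcal{E}(v)$ (though it is true).

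\textbf{A small repair in the Euler--Lagrange step.} Imposing only $\nabla\phi_i\cdot\nu=0$ is not enough. When $\partial\Omega$ is curved, $(I+\eps\nabla\phi_i)_{\#}u^k_{\tau,i}$ may charge points outside $\Omega$, since a tangential displacement exits a strictly convex domain at order $\eps^2$. Such a measure is not an admissible competitor. There are two fixes:
\begin{itemize}
\item take $\phi_i\in C_c^\infty(\Omega)$, as the paper does; or
\item derive the discrete identity from the first-variation condition $\nabla\phi_i^{k\to k-1}=-\tau\nabla(Au^k)_i$ on $\{u^k_i>0\}$, obtained by perturbing densities, together with a Taylor expansion of $\phi_i$ along the optimal map. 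This works for every $\phi_i\in C^2(\overline{\Omega})$ and so also captures the no-flux condition.
\end{itemize}
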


The proof is inspired from \cite{CEFF24}. It is based on the JKO scheme. Let $0=t_0<t_1<\cdots<t_m=T$ for $m\in\N$ be a partition of time instances with the time step size $\tau_k=t_{k+1}-t_{k}>0$, and define a sequence of vector-valued probability measures $u^k\in P_2(\Omega)^N$ iteratively by
\begin{align*}
  u^{k+1} = \argmin_{u\in P_2(\Omega)^N}
  \bigg(\frac{W_2^2(u^k,u)}{2\tau_k} + \mathcal{E}(u)\bigg).
\end{align*}
This scheme is well defined since the functional $u\mapsto u^TAu$ in the energy $\mathcal{E}$ is convex in the usual Euclidean sense (as $A$ is assumed to be positive definite). We suppose that the time step size converges to zero as the partition becomes finer, i.e.
\begin{equation}\label{eq:conv_unif_timestep}
  \lim_{m \rightarrow \infty } \sup_{k=0,\ldots,m-1} \tau_k = 0.
\end{equation}
The solution to the minimization problem can be characterized in terms of the 1-convex Kantorovich potential $\phi^{k+1\to k}$ that gives the transport map from $u^{k+1}$ to $u^k$, i.e.\ $u^k = (I-\na\phi^{k+1\to k})_{\#}u^{k+1}$.

\begin{lemma}\label{lem.kant}
The Kantorovich potential $\phi^{k+1\to k}=(\phi_i^{k+1\to k})_{i=1}^N$ verifies
\begin{equation}\label{3.kant}
  \frac{\phi_i^{k+1\to k}}{\tau_k} + \sum_{j=1}^N a_{ij}u_j^{k+1}
  = C \ \mbox{on }\operatorname{supp}(u_i^{k+1}), \quad
  \frac{\phi_i^{k+1\to k}}{\tau_k} + \sum_{j=1}^N a_{ij}u_j^{k+1}
  \ge C \ \mbox{else},
\end{equation}
where the constant $C$ does not depend on $x\in\Omega$. Moreover, the map $x\mapsto \sum_{j=1}^N a_{ij}u_j^{k+1}(x)$ is Lipschitz continuous on $\operatorname{supp}(u_i^{k+1})$ and its gradient is defined in $L^2(\Omega;u_i^{k+1}dx)$.  
\end{lemma}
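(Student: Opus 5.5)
We derive the optimality conditions of the JKO step by first variation, following the standard argument in \cite[Prop.~8.7]{San15} adapted to the vector case. Fix $i$ and perturb only the $i$-th component: for any $\tilde u_i\in P_2(\Omega)$ with $\mathcal{E}$ finite, set $u_i^\eps=(1-\eps)u_i^{k+1}+\eps\tilde u_i$ and keep $u_j^{k+1}$ for $j\neq i$. Minimality of $u^{k+1}$ gives
\[
  0\le \frac{d}{d\eps}\Big|_{\eps=0^+}\bigg(\frac{W_2^2(u_i^k,u_i^\eps)}{2\tau_k}+\mathcal{E}(u^\eps)\bigg).
\]

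For the energy, which is quadratic, the derivative is
\[
  \int_\Omega\sum_j a_{ij}u_j^{k+1}\,(\tilde u_i-u_i^{k+1})\,dx,
\]
using the symmetry of $A$, which is implicit in the energy. For the transport term, we use the known first variation of $\mu\mapsto\frac12W_2^2(\mu,\nu)$. This is $\int\phi\,d(\tilde\mu-\mu)$, with $\phi$ the Kantorovich potential from $u_i^{k+1}$ to $u_i^k$. It is valid when the potential is unique up to constants, which holds here since we may assume $u_i^{k+1}>0$ a.e. (or otherwise argue via strict positivity of a perturbed measure, as in \cite{San15}).

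Combining the two gives
\[
  \int_\Omega\Big(\frac{\phi_i}{\tau_k}+\sum_j a_{ij}u_j^{k+1}\Big)\,d(\tilde u_i-u_i^{k+1})\ge0
\]
for all admissible $\tilde u_i$. Call the function in brackets $\psi$ and set $C=\operatorname{ess\,inf}\psi$. Concentrating $\tilde u_i$ near points where $\psi$ is close to $C$ shows $\int\psi\,du_i^{k+1}\le C$. Since $\psi\ge C$ everywhere, $\psi=C$ holds $u_i^{k+1}$-a.e. This gives both relations in \eqref{3.kant}; the constant depends on $i$ but not on $x$.

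For the regularity statement, on $\operatorname{supp}(u_i^{k+1})$ we have $\sum_j a_{ij}u_j^{k+1}=C-\phi_i/\tau_k$. Kantorovich potentials on a compact set are Lipschitz, with constant $\le\operatorname{diam}\Omega$, since $|x|^2/2-\phi_i$ is convex and finite on a bounded domain. Hence the map is Lipschitz with gradient $-\na\phi_i/\tau_k$. Moreover, \eqref{2.W2Kant} gives
\[
  \int|\na\phi_i|^2u_i^{k+1}dx=W_2^2(u_i^{k+1},u_i^k)<\infty,
\]
so the gradient lies in $L^2(\Omega;u_i^{k+1}dx)$.

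The main obstacle is justifying the first-variation formula of $W_2^2$. It requires uniqueness of the Kantorovich potential, and it requires the competitor $\tilde u_i$ to have finite energy, so $\tilde u_i\in L^2$. To handle the concentration step I would take $\tilde u_i$ to be a normalized indicator of a small set where $\psi<C+\delta$ intersected with a Lebesgue point set, which is in $L^2$. For uniqueness I would use the standard trick of perturbing the target slightly to get a positive density, or invoke \cite[Prop.~7.18]{San15}. Beyond that, the rest is routine.
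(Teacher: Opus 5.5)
Your argument is essentially the paper's: perturb only the $i$th component by a convex combination, combine the first variations of $\frac{1}{2\tau_k}W_2^2(u_i^k,\cdot)$ and of the quadratic energy as in \cite{San15}, and read off the Lipschitz property and the $L^2(\Omega;u_i^{k+1}dx)$ gradient from the ($c$-concave, hence $\operatorname{diam}\Omega$-Lipschitz) Kantorovich potential and \eqref{2.W2Kant}. The one point to tighten, which the paper also passes over, is uniqueness of the potential. You cannot simply assume $u_i^{k+1}>0$ a.e.\ or full support, because minimizers of this porous-medium-type JKO step may vanish on open sets. You should therefore use your fallback of replacing $u^k$ by $(1-\delta)u^k+\delta/|\Omega|$ and letting $\delta\to0$; this works because the JKO minimizer is unique by strict convexity of $\mathcal{E}$, and convergence of the minimal values gives strong $L^2$ convergence of the perturbed minimizers, which lets the optimality relations pass to the limit.
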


\begin{proof}
The proof is based on classical arguments; see, e.g., \cite[Chap.~7.2]{San15}. Therefore, we give a sketch of the proof only. Let $i\in\{1,\ldots,N\}$. We perturb the optimal element $u^{k+1}$ by defining $u_i(\eps):=(1-\eps)u_i^{k+1} + \eps\bar{u}$, where $\bar{u}\in P_2(\Omega)\cap L^\infty(\Omega)$, and $u_j(\eps):=u_j^{k+1}$ for $j\neq i$, $\eps>0$. Set $\Phi(u^k,u) = W_2^2(u^k,u)/(2\tau_k) + \mathcal{E}(u)$. By optimality,
\begin{align*}
  \frac{\Phi(u^k,u(\eps)) - \Phi(u^k,u^{k+1})}{\eps}\ge 0.
\end{align*}
Using the first variation results for the Wasserstein distance and the energy \cite[Remark 7.13, Theorem 7.17]{San15}, we can pass to the limit $\eps\to 0$ \cite[Theorem 7.20]{San15}, leading to \eqref{3.kant}. The transport $\na\phi_i^{k+1\to k}$ is bounded, since the domain is bounded; thus, the Kantorovich potential is Lipschitz continuous. Taking into account \eqref{3.kant}, the map $x\mapsto\sum_{j=1}^N a_{ij}u_j(x)$ is Lipschitz continuous too and consequently, its gradient is defined in the space $L^2(\Omega;u_i^{k+1}dx)$. 
\end{proof}

We define the piecewise constant approximation $u^{(m)}$ by
\begin{align}\label{3.um}
  u^{(m)}(t,x) = u^k(x) \quad\mbox{for }x\in\Omega,\ t\in(t_{k-1},t_k],
  \ k=1,\ldots,m.
\end{align} 
First, we prove that this sequence has a limit in $C^0([0,T];L^2(\Omega)^N)$ by applying the Arzel\`a--Ascoli theorem.

\begin{lemma}\label{lem.L2}
Let $u^0\in L^1(\Omega)^N$ satisfy $\mathcal{E}(u^0)<\infty$. Up to a subsequence, $u^{(m)}$ converges to some function $u$ in $C^0([0,T];P_2(\Omega)^N)$. The limit $u$ is H\"older continuous of index $1/2$ with values in $P_2(\Omega)^N$ (endowed with the Wasserstein metric). Moreover, there exists a constant $C>0$ depending only on $u^0$ (but not on $m$) such that 
\begin{align}\label{3.L2}
  \|u^{(m)}\|_{L^\infty(0,T;L^2(\Omega))}
  + \|u\|_{L^\infty(0,T;L^2(\Omega))} \le C.
\end{align}
\end{lemma}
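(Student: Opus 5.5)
The argument is the standard energy estimate for the JKO scheme, followed by a refined Arzelà--Ascoli theorem in the compact metric space $(P_2(\Omega)^N,W_2)$.

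\textbf{Step 1: one-step energy inequality.} Comparing the minimizer $u^{k+1}$ with the competitor $u=u^k$ in the minimization problem gives
\begin{align*}
  \frac{W_2^2(u^k,u^{k+1})}{2\tau_k} + \mathcal{E}(u^{k+1}) \le \mathcal{E}(u^k).
\end{align*}
Summing over $k$ then yields $\mathcal{E}(u^k)\le\mathcal{E}(u^0)$ for all $k$ and
\begin{align*}
  \sum_{k=0}^{m-1}\frac{W_2^2(u^k,u^{k+1})}{2\tau_k}\le \mathcal{E}(u^0)-\inf\mathcal{E}\le \mathcal{E}(u^0),
\end{align*}
where we used $\mathcal{E}\ge0$, which holds because $A$ is positive definite. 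Positive definiteness also gives $\mathcal{E}(u)\ge\frac{\lambda_{\min}}{2}\|u\|_{L^2}^2$, so $\|u^{(m)}(t)\|_{L^2}\le (2\mathcal{E}(u^0)/\lambda_{\min})^{1/2}$ uniformly in $t$ and $m$. This is the first half of \eqref{3.L2}.

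\textbf{Step 2: approximate Hölder estimate.} Take $s<t$ with $s\in(t_{j-1},t_j]$ and $t\in(t_{l-1},t_l]$. By the triangle inequality and Cauchy--Schwarz,
\begin{align*}
  W_2(u^{(m)}(s),u^{(m)}(t))\le\sum_{k=j}^{l-1}W_2(u^k,u^{k+1})
  \le\Big(\sum_{k=j}^{l-1}\tau_k\Big)^{1/2}\big(2\mathcal{E}(u^0)\big)^{1/2}.
\end{align*}
The time sum satisfies $\sum_{k=j}^{l-1}\tau_k\le |t-s|+2\sup_k\tau_k$. Hence
\begin{align*}
  W_2(u^{(m)}(s),u^{(m)}(t))\le C\big(|t-s|+\sup_k\tau_k\big)^{1/2}.
\end{align*}

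\textbf{Step 3: compactness and passage to the limit.} The space $(P_2(\Omega)^N,W_2)$ is compact because $\Omega$ is compact. The estimate of Step 2 is an equicontinuity estimate up to the error $\sup_k\tau_k$, which vanishes by \eqref{eq:conv_unif_timestep}. A refined Arzelà--Ascoli theorem, such as \cite[Prop.~3.3.1]{AGS05}, then gives a subsequence converging uniformly on $[0,T]$ to some $u\in C^0([0,T];P_2(\Omega)^N)$. Letting $m\to\infty$ in Step 2 shows $W_2(u(s),u(t))\le C|t-s|^{1/2}$, which is the claimed Hölder continuity of index $1/2$.

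The $L^2$ bound on $u$ follows from weak lower semicontinuity. For each fixed $t$, $u^{(m)}(t)\to u(t)$ narrowly, and $u^{(m)}(t)$ is bounded in $L^2$. A further weakly convergent subsequence in $L^2$ must have limit $u(t)$, since its limit is identified by testing against continuous functions. Therefore $\|u(t)\|_{L^2}\le\liminf_m\|u^{(m)}(t)\|_{L^2}\le C$ for every $t$, which completes \eqref{3.L2}.

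\textbf{Main obstacle.} The only delicate point is that $u^{(m)}$ is piecewise constant and hence not continuous, so the classical Arzelà--Ascoli theorem does not apply directly. This is handled by the version of the theorem that tolerates an equicontinuity error tending to zero. Alternatively, one can replace $u^{(m)}$ by the piecewise geodesic interpolation, which satisfies the same estimate with uniform distance to $u^{(m)}$ bounded by $C(\sup_k\tau_k)^{1/2}$, apply the classical theorem to it, and transfer the convergence back to $u^{(m)}$.
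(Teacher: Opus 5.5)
Your proof is correct and follows essentially the same route as the paper: the one-step JKO energy inequality, the Cauchy--Schwarz bound on $\sum_k W_2(u^k,u^{k+1})$, Arzel\`a--Ascoli in the compact space $(P_2(\Omega)^N,W_2)$, and weak lower semicontinuity for the $L^2$ bound on the limit (the paper phrases this last step by duality, bounding $\int_\Omega u(t)\phi\,dx$ by $C\|\phi\|_{L^2(\Omega)}$ for continuous $\phi$). Your treatment is in fact more careful than the paper's, which drops the $\sup_k\tau_k$ error in \eqref{3.Hoelder} and applies the classical Arzel\`a--Ascoli theorem directly to the discontinuous piecewise-constant interpolants; note only that the refined compactness result you cite gives pointwise convergence, which your uniform-in-$m$ estimate then upgrades to uniform convergence on $[0,T]$.
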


\begin{proof}
Let $m\in\N$ be fixed and let $0\le i<j\le m$. The sequence $(\mathcal{E}(u^k))_k$ is nonincreasing since, by the JKO scheme,
\begin{align*}
  \mathcal{E}(u^{k+1})
  \le \mathcal{E}(u^{k+1}) + \frac{W_2^2(u^k,u^{k+1})}{2\tau_k}
  \le \mathcal{E}(u^{k}) + \frac{W_2^2(u^k,u^{k})}{2\tau_k}
  = \mathcal{E}(u^{k}).
\end{align*}
We infer that $\mathcal{E}(u^k)\le\mathcal{E}(u^0)$ for $k=1,\ldots,m$. Summing the previous inequalities over $k=i,\ldots,j-1$ leads to
\begin{align}\label{3.aux}
  \mathcal{E}(u^j) + \sum_{k=i}^{j-1}\frac{W_2^2(u^k,u^{k+1})}{2\tau_k}
  \le \mathcal{E}(u^i) \le \mathcal{E}(u^0).
\end{align}
Let $0\le s\le t\le T$ and let $k$ (respectively, $k'$) be the unique elements of $\N$ such that $s\in(t_{k-1},t_{k}]$ ($t\in(t_{k'-1},t_{k'}]$). Then, by \eqref{3.aux},
\begin{align}\label{3.Hoelder}
  W_2&(u^{(m)}(s),u^{(m)}(t)) = W_2(u^k,u^{k'})
  \le \sum_{\ell=k}^{k'-1} W_2(u^{\ell-1},u^\ell) \\
  &\le \bigg(\sum_{\ell=k}^{k'-1}\frac{W_2^2(u^k,u^{k+1})}{2\tau_k}
  \bigg)^{1/2}\bigg(\sum_{\ell=k}^{k'-1}2\tau_k\bigg)^{1/2}
  \le \sqrt{\mathcal{E}(u^0)}\sqrt{2(t-s)}. \nonumber 
\end{align} 
As $P_2(\Omega)$ is compact ($\Omega$ being compact), we can apply the Arzel\`a--Ascoli theorem to conclude the first part of the lemma. 

For estimate \eqref{3.L2}, the positive definiteness of $A$ implies for all $0\le k\le m$ that
\begin{align}\label{3.aux2}
  \|u^k\|_{L^2(\Omega)}^2 \le C\mathcal{E}(u^k) \le C\mathcal{E}(u^0).
\end{align}
Next, let $\phi\in C^0(\Omega)$ and $t\in[0,T]$. Since $(u^{(m)})$ converges in $C^0([0,T];P_2(\Omega)^N)$, we have
\begin{align*}
  \lim_{m\to\infty}\int_\Omega u^{(m)}(t)\phi dx
  = \int_\Omega u\phi dx.
\end{align*}
We deduce from \eqref{3.aux2} and the Cauchy--Schwarz inequality that
\begin{align*}
  \bigg|\int_\Omega u^{(m)}(t)\phi dx\bigg|  
  \le C\sqrt{\mathcal{E}(u^0)}\|\phi\|_{L^2(\Omega)}
\end{align*}
for any $m\in\N$. The limit $m\to\infty$ yields
\begin{align*}
  \bigg|\int_\Omega u(t)\phi dx\bigg| 
  \le C\sqrt{\mathcal{E}(u^0)}\|\phi\|_{L^2(\Omega)}.
\end{align*}
This proves $u\in L^\infty(0,T;L^2(\Omega)^N)$ and the second statement of the lemma. Moreover, \eqref{3.Hoelder} implies the H\"older continuity of index $1/2$.
\end{proof}

We also need a uniform gradient bound for $u^{(m)}$, which is derived from the flow interchange technique. In \cite[Sec.~4.1]{CEW24}, this technique is used with a suitable auxiliary flow, namely the Wasserstein gradient flow for the heat equation associated to the Boltzmann entropy. Here, we focus on the integration-by-parts property. 

\begin{lemma}\label{lem.H1}
Let $\mathcal{E}(u^0)<\infty$ and $\mathcal{H}(u^0)<\infty$. Then there exists $C>0$ only depending on $u^0$ such that
\begin{align*}
  \|\na u^{(m)}\|_{L^2(\Omega)} \le C.
\end{align*}
\end{lemma}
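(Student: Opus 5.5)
The plan is to obtain the bound in the time-integrated form $\|\na u^{(m)}\|_{L^2(0,T;L^2(\Omega))}\le C$, which is what Theorem \ref{thm.ex} requires. It comes from an entropy production inequality along the JKO scheme, obtained by flow interchange with the auxiliary heat flow generated by $\mathcal{H}$. The target one-step inequality is
\begin{align*}
  \mathcal{H}(u^k) - \mathcal{H}(u^{k+1}) \ge \tau_k\sum_{i,j=1}^N\int_\Omega a_{ij}\na u_i^{k+1}\cdot\na u_j^{k+1}dx
  \ge \alpha\tau_k\|\na u^{k+1}\|_{L^2(\Omega)}^2,
\end{align*}
where $\alpha>0$ is the smallest eigenvalue of the symmetric part of $A$. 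Summing over $k=0,\ldots,m-1$ gives $\alpha\int_0^T\|\na u^{(m)}\|_{L^2}^2dt\le \mathcal{H}(u^0)-\mathcal{H}(u^m)$. Since $\Omega$ is bounded, $\mathcal{H}$ is bounded from below on $P_2(\Omega)^N$ (by Jensen, $\mathcal{H}(v)\ge -N\log\operatorname{meas}(\Omega)-N$). This yields the claim with $C$ depending only on $\mathcal{H}(u^0)$, $A$ and $\Omega$.

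Formally, the one-step inequality follows from the displacement convexity of $\mathcal{H}$; on a convex domain it is $0$-geodesically convex, which is McCann's criterion. Since $u^k=(I-\na\phi^{k+1\to k})_{\#}u^{k+1}$, the above-tangent inequality gives
\[
  \mathcal{H}(u^k)\ge\mathcal{H}(u^{k+1})-\sum_i\int_\Omega\na u_i^{k+1}\cdot\na\phi_i^{k+1\to k}dx.
\]
Lemma \ref{lem.kant} gives $\na\phi_i^{k+1\to k}=-\tau_k\sum_j a_{ij}\na u_j^{k+1}$ on $\operatorname{supp}(u_i^{k+1})$. Substituting this yields the displayed estimate.

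The rigorous version, which I expect to be the main obstacle, avoids the unknown regularity of $u^{k+1}$: a priori we do not know that $\na u_i^{k+1}$ is in $L^2$, nor may we write $u_i\na\log u_i=\na u_i$. Following \cite{MMS09,CEW24}, I would let $S_s$ be the heat semigroup with Neumann conditions, applied componentwise; it is the Wasserstein gradient flow of $\mathcal{H}$ and satisfies the EVI
\[
  \frac12\frac{d}{ds}W_2^2(S_su^{k+1},u^k)\le\mathcal{H}(u^k)-\mathcal{H}(S_su^{k+1}).
\]
By minimality of $u^{k+1}$ in the JKO step, $\mathcal{E}(u^{k+1})+W_2^2(u^k,u^{k+1})/(2\tau_k)\le\mathcal{E}(S_su^{k+1})+W_2^2(u^k,S_su^{k+1})/(2\tau_k)$. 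Dividing by $s$ and using the EVI gives
\[
  -\tau_k\,\limsup_{s\downarrow0}\frac{\mathcal{E}(S_su^{k+1})-\mathcal{E}(u^{k+1})}{s}\le\mathcal{H}(u^k)-\mathcal{H}(u^{k+1}).
\]
For $s>0$ the function $S_su^{k+1}$ is smooth, so integration by parts is legitimate:
\[
  \frac{d}{ds}\mathcal{E}(S_su^{k+1})=-\sum_{i,j}\int_\Omega a_{ij}\na S_su_i^{k+1}\cdot\na S_su_j^{k+1}dx\le-\alpha\|\na S_su^{k+1}\|_{L^2}^2.
\]
Hence the difference quotient is bounded above by $-\frac{\alpha}{s}\int_0^s\|\na S_ru^{k+1}\|_{L^2}^2dr$. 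Passing $s\downarrow0$ uses that $r\mapsto\|\na S_ru^{k+1}\|_{L^2}^2$ is nonincreasing, together with lower semicontinuity of the $H^1$ seminorm under $S_ru^{k+1}\to u^{k+1}$ in $L^2$; the initial datum is in $L^2$ by \eqref{3.aux2}. This shows $\na u^{k+1}\in L^2$ and yields the one-step inequality.

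Finally, I would sum over $k$ and use the definition \eqref{3.um} of $u^{(m)}$ as a piecewise constant function to conclude.
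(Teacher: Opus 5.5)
Your proof is correct, but you make it rigorous by a different route than the paper. The paper makes your ``formal'' computation itself rigorous. Along the geodesic $t\mapsto(I-t\na\phi^{k+1\to k})_{\#}u^{k+1}$ it computes the derivative of $\mathcal{H}$ at $t=0$ as $\int_\Omega u^{k+1}\Delta\phi^{k+1\to k}dx$ and uses geodesic convexity of $\mathcal{H}$ for the above-tangent inequality. It then integrates by parts and inserts the Euler--Lagrange relation of Lemma~\ref{lem.kant}. To justify the integration by parts, it shows that $u^{k+1}$ is Lipschitz on $\Omega$: starting from the Lipschitz continuity of $(Au^{k+1})_i$ on $\operatorname{supp}(u_i^{k+1})$, it inverts $A$ (or a submatrix) on the pieces of $\Omega$ where the various species are present. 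The paper mentions the heat-flow interchange from the literature it cites, but deliberately ``focuses on the integration-by-parts property''. Your flow interchange via the EVI of the Neumann heat semigroup uses neither Lemma~\ref{lem.kant} nor any regularity of the minimizer. Instead, $\na u^{k+1}\in L^2(\Omega)$ is an output, coming from monotonicity and lower semicontinuity of the Dirichlet energy along $S_s$; you only need $u^{k+1}\in L^2(\Omega)$ and convexity of $\Omega$. This is the more robust argument. The gluing step is the most delicate part of the paper's proof: points where several species vanish, and the passage from piecewise to global Lipschitz continuity, need extra care, for instance via Lipschitz dependence of the pointwise linear complementarity problem defined by the positive definite $A$. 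The paper's route, in turn, gives the geometric picture and the exact production term $\tau_k\int_\Omega\na u^{k+1}:A\na u^{k+1}dx$ directly. You can recover that term too, since $r\mapsto\sum_{i,j}a_{ij}\int_\Omega\na S_ru_i^{k+1}\cdot\na S_ru_j^{k+1}dx$ is also nonincreasing. Your reading of the estimate as an $L^2(0,T;L^2(\Omega))$ bound, and your explicit lower bound on $\mathcal{H}$, match what the paper's summation actually yields.
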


\begin{proof}
Let $0\le k\le m-1$ and define the geodesic linking from $u^{k+1}$ to $u^k$ by
\begin{align*}
  u(t) := (I-t\na\phi^{k+1\to k})_{\#}u^{k+1}, \quad t>0.
\end{align*}
As $\phi^{k+1 \rightarrow k}$ is $1$-convex, it is twice approximately differentiable by Alexandrov's theorem, so we can apply \cite[Lemma 10.4.4]{AGS05}:
\begin{align}\label{3.dHdt}
  \frac{d\mathcal{H}}{dt}(u(t))\bigg|_{t=0}
  = \int_\Omega u^{k+1}\cdot\Delta \phi^{k+1\to k}dx.
\end{align} 

We claim that $u^{k+1}$ is Lipschitz continuous on $\Omega$. Indeed, we know from Lemma \ref{lem.kant} that $w_i:=(Au^{k+1})_i$ is Lipschitz continuous on $\mbox{supp}(u_i^{k+1})$. Consequently, $u^{k+1} = A^{-1}w$ is Lipschitz continuous on $\bigcap_i\mbox{supp}(u_i^{k+1})$. Let $i\in\{1,\ldots,N\}$ and set $U_i:=\bigcap_{j\neq i} \mbox{supp}(u_j^{k+1})\cap\mbox{supp}(u_i^{k+1})^c$. Let $A^{-1}_{[i]}\in\R^{(n-1)\times(n-1)}$ be the matrix $A^{-1}$ with the $i$th line and column removed and let $w^{[i]}\in\R^{n-1}$ be the vector $w$ with the $i$th entry removed. The submatrix $A^{-1}_{[i]}$ is invertible since $A^{-1}$ does. Thus, $u_i^{k+1}$ is Lipschitz continuous on $U_i$. Since the union of $U_i$ equals $\Omega$, we conclude that $u^{k+1}$ is Lipschitz continuous on $\Omega$.

Therefore, we can integrate by parts in \eqref{3.dHdt} giving
\begin{align*}
  \frac{d\mathcal{H}}{dt}(u(t))\bigg|_{t=0}
  = -\int_\Omega\na u^{k+1}:\na\phi^{k+1\to k}dx,
\end{align*}
where ``:'' denotes the Frobenius matrix product (summation over both matrix indices). We use the geodesic convexity of $\mathcal{H}$ \cite[Theorem 7.28]{San15}, Lemma \ref{lem.kant}, and the positive definiteness of $A$:
\begin{align*}
  \mathcal{H}(u^k) - \mathcal{H}(u^{k+1}) 
  \ge \frac{d\mathcal{H}}{dt}(u(t))\bigg|_{t=0}
  = \tau_k\int_\Omega \na u^{k+1}:A\na u^{k+1}dx
  \ge C\tau_k\|\na u^{k+1}\|_{L^2(\Omega)}^2.
\end{align*}
It follows after summation over $k$ that
\begin{align*}
  \mathcal{H}(u^0) \ge \mathcal{H}(u^k)
   + C\sum_{\ell=1}^k\tau_{\ell-1}\|\na u^\ell\|_{L^2(\Omega)}^2,
\end{align*}
from which we infer the result.
\end{proof}

%\begin{remark}[Comparison with the entropy method]\rm
The idea of the entropy method \cite{Jue16} is to solve iteratively the system
\begin{align*}
  \frac{1}{\tau_k}(u_i^{k+1}-u_i^k) 
  = \diver\bigg(\sum_{j=1}^N u_i^{k+1}a_{ij}\na u_j^{k+1}\bigg)
  + \eps\times\mbox{higher-order term},
\end{align*}
using the Leray--Schauder fixed-point theorem. The evolution of the Boltzmann entropy $\mathcal{H}$ along this discrete dynamics (and a suitable choice of the higher-order term) yields uniform bounds in $L^2(0,T;H^1(\Omega))$, similarly to Lemma \ref{lem.H1}. After showing some bound on the discrete time derivative of $u^k$, the Aubin--Lions lemma yields the compactness in $L^2(0,T;L^2(\Omega))$. In our setting, we apply a refinement of the Aubin--Lions lemma, proved in \cite{RoSa03}, to obtain the compactness of the sequence $(u^{(m)})$. We recall this result for the convenience of the reader.

\begin{theorem}[Rossi, Savar\'e]\label{thm.RS}
Let $X$ be a separable Banach space and let be given
\begin{itemize}
\item a lower semicontinuous functional $J:X\to[0,\infty]$ with relatively compact sublevels in $X$ (i.e.\ sets of the form $J^{-1}([0,c))$ with $c>0$),
\item a pseudo-distance $g:X\times X\to[0,\infty]$, i.e., $g$ is a lower semicontinuous function such that it follows from $g(u,v)=0$ for any $u$, $v\in X$ with $J(u)<\infty$ and $J(v)<\infty$ that $u=v$.
\end{itemize}
Let $T>0$ and let $U$ be a set of measurable functions $u:(0,T)\to X$, which is tight with respect to $J$ and satisfies the weak integral equicontinuity condition, i.e.
\begin{align*}
  \sup_{u\in U}\int_0^T J(u(t))dx < \infty, \quad
  \lim_{h\to 0}\sup_{u\in U}\int_0^{T-h}g(u(t+h),u(t))dt = 0.
\end{align*}
Then $U$ contains a subsequence $(u_m)_{m\in\N}$ which converges in measure to a measurable function $u:(0,T)\to X$, i.e.
\begin{align*}
  \lim_{m\to\infty}\big|\big\{t\in(0,T):
  \|u_m(t)-u(t)\|_X\ge\delta\|\big\}\big| = 0
  \quad\mbox{for all }\delta>0.
\end{align*}
\end{theorem}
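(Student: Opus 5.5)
We plan to prove Theorem \ref{thm.RS} with Young measures. Each $u\in U$ is encoded by the parametrized measure $t\mapsto\delta_{u(t)}$ on $X$. The tightness with respect to $J$ gives compactness of these measures, and the weak integral equicontinuity is used to show that every limit is again a Dirac measure. The last step uses the standard fact that a sequence whose Young measures converge to $\delta_{u(t)}$ converges in measure to $u$.

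\emph{Step 1: compactness.} Since $J$ is lower semicontinuous with relatively compact sublevels and $\sup_{u\in U}\int_0^T J(u(t))dt<\infty$, the Young measures $\delta_{u_m(t)}$ are tight in the sense of Balder's theorem. Hence a subsequence converges narrowly to a measurable family $(\nu_t)_{t\in(0,T)}$ of probability measures on $X$. This means that
\begin{align*}
  \int_0^T\varphi(t,u_m(t))dt\to\int_0^T\int_X\varphi(t,x)\,d\nu_t(x)\,dt
\end{align*}
for all bounded Carath\'eodory integrands $\varphi$. Moreover, the liminf inequality holds for nonnegative normal integrands. Applying it to $J$ gives $\int_0^T\int_X J\,d\nu_t\,dt<\infty$. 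Hence $\nu_t$ is concentrated on $\{J<\infty\}$ for a.e.\ $t$.

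\emph{Step 2: product structure.} Consider the two-variable maps $(s,r)\mapsto(u_m(s),u_m(r))$ on $(0,T)^2$. For product test functions $\varphi(s)\psi(r)f(x)h(y)$, the integral factorizes into $\int\varphi f(u_m(s))ds\cdot\int\psi h(u_m(r))dr$. By density, the Young measure limit of these pairs is therefore $\nu_s\otimes\nu_r$. Next we integrate $g$ over the strip $S_h=\{|s-r|<h\}$ and write it as an average over shifts. The weak integral equicontinuity then gives
\begin{align*}
  \frac1h\int\!\!\int_{S_h} g(u_m(s),u_m(r))\,ds\,dr\le 2\sup_{0<\sigma<h}\int_0^{T-\sigma}g(u_m(t+\sigma),u_m(t))dt=:\omega(h),
\end{align*}
uniformly in $m$, with $\omega(h)\to0$ as $h\to0$. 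Since $g\ge0$ is lower semicontinuous, the liminf inequality yields
\begin{align*}
  \frac1h\int\!\!\int_{S_h}\int\!\!\int g\,d\nu_s\otimes d\nu_r\,ds\,dr\le\omega(h).
\end{align*}

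\emph{Step 3: Dirac limit (main obstacle).} We must pass from this strip estimate to $\int\!\!\int g\,d(\nu_t\otimes\nu_t)=0$ for a.e.\ $t$. This is the delicate point, because $g$ is only lower semicontinuous and $t\mapsto\nu_t$ is merely measurable.

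As a first attempt we would use Lebesgue points of $t\mapsto\nu_t$ in the narrow topology. To have these available, we would first restrict to the compact closure $K_c$ of a sublevel set of $J$; there $P(K_c)$ is a compact metrizable space, so Lusin's theorem applies. We would then approximate $g$ from below by an increasing sequence of bounded continuous functions on $K_c\times K_c$. This reduces the problem to the continuous case, where dividing by $h$ and letting $h\to0$ is a Lebesgue differentiation argument. Monotone convergence then gives
\begin{align*}
  \int\!\!\int g\,d\nu_t\,d\nu_t=0\quad\mbox{for a.e. }t.
\end{align*}
Since $\nu_t$ is concentrated on $\{J<\infty\}$, the product $\nu_t\otimes\nu_t$ is concentrated on $\{g=0\}\cap\{J<\infty\}^2$. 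By the pseudo-distance property this set lies in the diagonal, so $\nu_t$ is a Dirac mass $\delta_{u(t)}$.

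\emph{Step 4: convergence in measure.} Fix $\delta>0$ and take the Carath\'eodory integrand $\varphi(t,x)=\min\{1,\|x-u(t)\|_X\}$. Young measure convergence gives
\begin{align*}
  \int_0^T\varphi(t,u_m(t))\,dt\to\int_0^T\int_X\varphi(t,x)\,d\delta_{u(t)}(x)\,dt=0.
\end{align*}
By Chebyshev's inequality this forces $|\{t:\|u_m(t)-u(t)\|_X\ge\delta\}|\to0$, which is the claimed convergence in measure.
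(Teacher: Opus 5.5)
The paper gives no proof of this theorem and only quotes it from \cite{RoSa03}; that argument, like yours, rests on Young-measure compactness under the tightness condition followed by a proof that the limit Young measure is a Dirac mass, so your route is essentially the standard one and it is sound. One small repair is needed: the factor-$2$ strip bound in Step~2 tacitly uses $g(x,y)=g(y,x)$, which the statement does not assume (the hypothesis only controls $\int_0^{T-h}g(u(t+h),u(t))\,dt$), so you should integrate over the one-sided strip $\{0<s-r<h\}$ instead, after which the Lebesgue-point argument of Step~3 goes through with one-sided averages (treating $\nu_t$ restricted to $K_c$ as a sub-probability measure, or applying Lusin's theorem directly in the separable metrizable space $P(X)$).
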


\begin{lemma}\label{lem.strong}
Up to a subsequence, the sequence $(u^{(m)})$, constructed in \eqref{3.um}, converges strongly in $L^2(0,T;L^2(\Omega)^N)$.
\end{lemma}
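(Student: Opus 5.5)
We apply Theorem \ref{thm.RS} (Rossi--Savar\'e) with $X=L^2(\Omega)^N$, which is separable. For the tightness functional we take
\begin{align*}
  J(v) = \|v\|_{H^1(\Omega)}^2 \ \mbox{if } v\in H^1(\Omega)^N \mbox{ and } v_i\in P_2(\Omega) \mbox{ for all } i, \qquad J(v)=+\infty \ \mbox{otherwise}.
\end{align*}
By the Rellich--Kondrachov theorem ($\Omega$ is bounded and convex, hence Lipschitz), the sublevels of $J$ are relatively compact in $L^2(\Omega)^N$. Lower semicontinuity of $J$ in $L^2$ follows because $H^1$-bounded sequences converge weakly in $H^1$, the norm is weakly lower semicontinuous, and the constraints of nonnegativity and unit mass are closed under $L^2$ convergence. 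For the pseudo-distance we set $g(v,w)=W_2(v,w)$ when both $v,w$ are vectors of probability densities, and $g=+\infty$ otherwise. This $g$ is lower semicontinuous with respect to $L^2$ convergence: $L^2$ convergence of densities on a bounded domain implies narrow convergence, and $W_2$ is narrowly lower semicontinuous (in fact continuous, since $\Omega$ is compact). Finally, $g(v,w)=0$ forces $v=w$.

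Tightness comes from Lemmas \ref{lem.L2} and \ref{lem.H1}. Lemma \ref{lem.L2} gives a uniform $L^\infty(0,T;L^2)$ bound. The proof of Lemma \ref{lem.H1} actually yields the summed estimate $\sum_\ell \tau_{\ell-1}\|\na u^\ell\|_{L^2}^2\le C\mathcal{H}(u^0)$, up to a lower bound on $\mathcal{H}$, which holds since $\Omega$ is bounded and the masses are fixed. Hence
\begin{align*}
  \sup_m\int_0^T J(u^{(m)}(t))\,dt
  \le \sup_m\Big(T\|u^{(m)}\|_{L^\infty(0,T;L^2)}^2 + \|\na u^{(m)}\|_{L^2(0,T;L^2)}^2\Big) < \infty.
\end{align*}
The one point needing care is that Lemma \ref{lem.H1} is stated pointwise in time, whereas what we need is the $L^2$-in-time version; the latter is what its proof gives. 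Weak integral equicontinuity follows from the H\"older-type estimate \eqref{3.Hoelder}: $W_2(u^{(m)}(t+h),u^{(m)}(t))\le C\sqrt{h+\sup_k\tau_k}$. The extra $\sup_k\tau_k$ appears because $u^{(m)}$ is piecewise constant, so two times at distance $h$ may lie in different intervals. Integrating over $t\in(0,T-h)$ and using \eqref{eq:conv_unif_timestep} gives $\lim_{h\to0}\sup_m$ of the integral equal to zero. This needs a little care, since for fixed $h$ the term $\sup_k\tau_k$ is not small for small $m$. For the finitely many $m$ with $\sup_k\tau_k\ge h_0$ we instead use that each single $u^{(m)}$ is piecewise constant, so $\int_0^{T-h} W_2(u^{(m)}(t+h),u^{(m)}(t))\,dt\to0$ as $h\to0$, since only a set of times of measure $O(mh)$ straddles a jump. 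Taking the supremum over the remaining $m$ then completes the equicontinuity.

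Theorem \ref{thm.RS} then yields a subsequence converging in measure in $L^2(\Omega)^N$, as functions of $t$, to some $v$. The final step upgrades this to strong convergence in $L^2(0,T;L^2(\Omega)^N)$, and we expect it to be the main obstacle, because convergence in measure alone does not control the $L^2$-in-time norm. We plan to use uniform integrability: $\|u^{(m)}(t)-v(t)\|_{L^2}^2$ is bounded uniformly in $t$ and $m$ by \eqref{3.L2}, so the bound on a set of times of small measure is small. Splitting $(0,T)$ into $\{\|u^{(m)}(t)-v(t)\|_{L^2}\ge\delta\}$ and its complement gives
\begin{align*}
  \int_0^T\|u^{(m)}-v\|_{L^2}^2\,dt
  \le C\,\big|\{t:\|u^{(m)}(t)-v(t)\|_{L^2}\ge\delta\}\big| + T\delta^2 \to T\delta^2,
\end{align*}
and then letting $\delta\to0$ proves the claim. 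Finally, $v=u$, the limit from Lemma \ref{lem.L2}, because both are limits of the same subsequence and $L^2$ convergence implies $W_2$ convergence for a.e.\ $t$.
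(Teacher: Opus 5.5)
Your proposal is correct and follows the same route as the paper. You use Theorem~\ref{thm.RS} with $X=L^2(\Omega)^N$, tightness from the time-integrated form of Lemma~\ref{lem.H1}, weak integral equicontinuity from \eqref{3.Hoelder}, and the upgrade from convergence in measure to strong $L^2(0,T;L^2(\Omega)^N)$ convergence via the uniform bound \eqref{3.L2}. Your extra care (adding the $L^2$ norm and the probability constraint to $J$ so its sublevels are genuinely relatively compact, extending $g$ by $+\infty$ off probability densities, and tracking the $\sup_k\tau_k$ term in the equicontinuity estimate) fixes details the paper's sketch glosses over.
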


\begin{proof}
We apply Theorem \ref{thm.RS} with $X=L^2(\Omega)^N$, $U=\{u^{(m)}:m\in\N\}$, $J(u)=\int_\Omega |\na u|^2dx$, and $g=W_2^2$. The tightness of $U$ with respect to $J$ follows from Lemma \ref{lem.H1}, and estimate \eqref{3.aux} implies the weak integral equi-continuity condition. We conclude from Theorem \ref{thm.RS} that a subsequence of $(u^{(m)})$ (not relabeled) converges in measure to some function $u$. We claim that $(u^{(m)})$ in fact converges strongly in $L^2(\Omega)^N$ (up to a subsequence). Let
\begin{align*}
  A_\delta^m = \big\{t\in(0,T):\|u^{(m)}(t)-u(t)\|_{L^2(\Omega)}
  \ge\delta\big\}.
\end{align*}
Then $\lim_{m\to\infty}A_\delta^m=0$ for all $\delta>0$. By Lemma \ref{lem.L2}, 
\begin{align*}
  \int_0^T\int_\Omega&|u^{(m)}-u|^2 dxdt
  \le \int_{(A_\delta^m)^c}\int_\Omega|u^{(m)}-u|^2 dxdt
  + \int_{A_\delta^m}\int_\Omega|u^{(m)}-u|^2 dxdt \\
  &\le \delta^2 T + 2|A_\delta^m|\big(
  \|u^{(m)}\|_{L^\infty(0,T;L^2(\Omega)^N)}^2
  + \|u\|_{L^\infty(0,T;L^2(\Omega)^N)}^2\big)
  \le \delta^2 T + C|A_\delta^m|.
\end{align*}
Consequently, choosing $\eps>0$ and $\delta:=\eps/(2T)$, there exists $M\in\N$ such that $|A_\delta^m|\le\eps/(2C)$ for any $m\ge M$ and
\begin{align*}
  \int_0^T\int_\Omega|u^{(m)}-u|^2 dxdt \le \eps.
\end{align*}
Since $\eps>0$ is arbitrary, this shows the claim.
\end{proof}

We have sufficient compactness to finish the proof of Theorem \ref{thm.ex}. Let $i\in\{1,\ldots,N\}$ and $\phi_i\in C_c^\infty(\Omega)$. Define $v_i^{k+1}(\eps) := (I-\eps\na\phi_i)_{\#}u_i^{k+1}$ and $v_j^{k+1}(\eps) := u_j^k$ for $j\neq i$. By the definition of the JKO scheme,
\begin{align}\label{3.ineq}
  \frac{1}{2\tau_k\eps}
  \big(W_2^2(u^k,v^{k+1}(\eps))-W_2^2(u^k,u^{k+1})\big) 
  + \frac{1}{\eps}\big(\mathcal{E}(v^{k+1}(\eps))-\mathcal{E}(u^k)\big)
  \ge 0.
\end{align}
The limit of the second term can be computed explicitly \cite[Sec.~10.2]{AGS05}:
\begin{align}\label{3.aux4}
  \lim_{\eps\to 0}\frac{1}{2 \eps}
  \big(W_2^2(u^k,v^{k+1}(\eps))-W_2^2(u^k,u^{k+1})\big) 
  = -\sum_{i=1}^N\int_\Omega\na \phi_i^{k+1\to k}
  \cdot\na\phi_i u_i^{k+1}dx.
\end{align}
We know from \eqref{3.Hoelder} that the function $u^{(m)}$ is H\"older continuous in $P_2(\Omega)^N$ with index $1/2$. With the optimal coupling $\gamma=(I\times(I-\na\phi_i^{k+1\to k}))_{\#}u_i^{k+1}$, a Taylor expansion around $x$ with the integral form of the remainder leads to 
\begin{align*}
  -\int_\Omega&\phi_i(u_i^{k+1}-u_i^k)dx
  = \int_\Omega(\phi_i(y)-\phi_i(x))d\gamma(x,y) \\
  &= ´\int_\Omega\big\{\phi_i\big(x-\na\phi_i^{k+1\to k}(x)\big)
  - \phi_i(x)\}u_i^{k+1}(x)dx \\
  &= -\int_\Omega\na\phi_i(x)\cdot\na\phi_i^{k+1\to k}(x)
  u_i^{k+1}(x)dx \\
  &\phantom{xx}+ \int_\Omega\int_0^1(\na\phi_i^{k+1\to k})^T(x)
  \na^2\phi_i\big(x-s\na\phi^{k+1\to k}(x)\big)
  \na\phi_i^{k+1\to k}(x)u_i^{k+1}(x)ds dx \\
  &= -\int_\Omega\na\phi_i\cdot\na\phi_i^{k+1\to k}u_i^{k+1}dx
  + O\big(W_2^2(u_i^k,u_i^{k+1})\big) 
\end{align*}
where the last step follows from \eqref{2.W2Kant}. We infer from \eqref{3.aux4} that
\begin{align}\label{3.W1}
  \lim_{\eps\to 0}\frac{1}{2\eps}
  \big(W_2^2(u^k,v^{k+1}(\eps))-W_2^2(u^k,u^{k+1})\big) 
  = \int_\Omega&\phi_i(u_i^{k+1}-u_i^k)dx + O(W_2^2(u_i^k,u_i^{k+1})).
\end{align}

We need to compute the terms involving the energy $\mathcal{E}$. Let $i\neq j$ and compute
\begin{align*}
  \frac{1}{\eps}\int_\Omega
  \big(v_i^{k+1}(\eps)u_j^{k+1} - u_i^{k+1}u_j^{k+1}\big)dx
  &= \frac{1}{\eps}\int_\Omega(v_i^{k+1}(\eps)-u_i^{k+1})u_j^{k+1}dx \\
  &= \frac{1}{\eps}\int_\Omega\big(u_j^{k+1}(x-\eps\na\phi_i(x))
  - u_j^{k+1}(x)\big)u_i^{k+1}(x)dx.
\end{align*}
As $\phi_i$ is smooth and $u_j^{k+1}\in L^2(0,T;H^1(\Omega))$, the term 
$(u_j^{k+1}(x-\eps\na\phi_i(x)) - u_j^{k+1}(x))/\eps$ is uniformly bounded in $\eps$. Thus, it admits a subsequence that converges weakly in $L^2(\Omega)$ to $\na u_j^{k+1}\cdot\na\phi_i$ as $\eps\to 0$. This shows that
\begin{align}\label{3.W2}
  \lim_{\eps\to 0}\frac{1}{\eps}\int_\Omega
  \big(v_i^{k+1}(\eps)u_j^{k+1} - u_i^{k+1}u_j^{k+1}\big)dx
  = -\int_\Omega u_i^{k+1}\na u_j^{k+1}\cdot\na\phi_i dx.
\end{align}
The case $i=j$ can be treated in a similar way. 

We insert \eqref{3.W1} and \eqref{3.W2} (multiplied by $a_{ij}$) into \eqref{3.ineq} and sum over $j=1,\ldots,N$ and $k=0,\ldots,m-1$ to find that
\begin{align}\label{3.aux5}
  -\int_\Omega\big(u_i^{(m)}(T)-u_i^{(m)}(0)\big)\phi_i dx
  &+ \sum_{j=1}^N a_{ij}\int_0^T\int_\Omega 
  u_i^{(m)}\na u_j^{(m)}\cdot\na\phi_i dxdt \\
  &\le O\bigg(\sum_{k=0}^{m-1} \tau_k W_2^2(u_i^k,u_i^{k+1})\bigg). \nonumber 
\end{align}
We deduce from the proof of Lemma \ref{lem.L2} that
$$
  \sum_{k=0}^{m-1}\frac{W_2^2(u_i^k,u_i^{k+1})}{\tau_k} 
  \leq \mathcal{E}(u^0) < \infty,
$$
so that by condition \eqref{eq:conv_unif_timestep}:
$$
  \lim_{m \rightarrow \infty} \sum_{k=0}^{m-1}
  \tau_k W_2^2(u_i^k,u_i^{k+1}) = 0.
$$
Hence, we can pass to the limit $m\to\infty$ in inequality \eqref{3.aux5}, since (a subsequence of) $(u^{(m)})$ converges strongly in $L^2(0,T;L^2(\Omega)^N)$ by Lemma \ref{lem.strong} and $\na u^{(m)}$ converges weakly in $L^2(0,T;L^2(\Omega)^N)$ by Lemma \ref{lem.H1}. Then $u_i^{(m)}\na u_j^{(m)}$ converges weakly in $L^1(0,T;L^1(\Omega))$, showing that the second term on the left-hand side of \eqref{3.aux5} converges. For the first term, we use the convergence of $u^{(m)}$ in $C^0([0,T];P^2(\Omega)^N)$ provided by Lemma \ref{lem.L2}. This shows that
\begin{align*}
  -\int_\Omega\big(u_i(T)-u_i(0)\big)\phi_i dx
  + \sum_{j=1}^N a_{ij}\int_0^T\int_\Omega 
  u_i\na u_j\cdot\na\phi_i dxst \le 0.
\end{align*}
Replacing $\phi_i$ by $-\phi_i$ yields the reverse inequality and hence, equality holds. This ends the proof of Theorem \ref{thm.ex}.

\begin{remark}\rm
A direct use of the method of the previous proof fails if we use a general energy $\mathcal{E}$. More specifically, consider
\begin{align*}
  \mathcal{E}(u) = \int_\Omega E(u)dx,
\end{align*}
where $E:\R^n\to[0,\infty)$ is a smooth convex function. The problem originates from the difficulty of establishing the chain rule. Indeed, let $\phi_i$ be a smooth test function and define $\theta_\eps:=I-\eps\na\phi_i$,  $v_i^k(\eps)=(\theta_\eps)_{\#}u_i^k$, and $v_j^k(\eps)=u_j^k$ for $j\neq i$. Then the change-of-variable formula \eqref{2.cov} gives
\begin{align*}
  \frac{1}{\eps}\big(\mathcal{E}(v^{k+1}(\eps)) - \mathcal{E}(u^k)\big)
  &= \frac{1}{\eps}\int_\Omega\big(E(v^{k+1}(\eps))-E(u^k)\big)dx \\
  &= \frac{1}{\eps}\int_\Omega
  \bigg\{E\bigg(u_1^k(\theta_\eps(x)),\ldots,u_{i-1}^k(\theta_\eps(x)),
  \frac{u_i^k(x)}{\det\theta'_\eps(x)}, \\
  &\phantom{xxxxxxx} u_{i+1}^k(\theta_\eps(x)),
  \ldots,u_N^k(\theta_\eps(x))\bigg)\det\theta'_\eps(x)
  - E(u^k)\bigg\}dx.
\end{align*}
To pass to the limit $\eps\to 0$ on the right-hand side, we may use the monotone convergence theorem if the energy satisfies the McCann condition \cite{McC97} or a weak--strong convergence argument if the energy has a quadratic structure. For a general energy functional, passing to the limit in order to obtain a weak formulation is more delicate and remains an open problem.
\qedd\end{remark}

\begin{remark}\rm 
Geometrically speaking, the JKO scheme provides a sequence of piecewise constant trajectories associated to $t\mapsto(u^{(m)}(t),A\na u^{(m)}(t))$ in the ``tangent bundle'' $TP_2(\Omega)$. Estimate \eqref{3.aux} yields the compactness of the trajectories $u^{(m)}$, while bounds from the entropy method allow us to prove the compactness for the tangent vector trajectories $\na(Au^{(m)})$. The limiting tangent vector can be identified as $\na(Au)$, where $u$ is the limiting trajectory obtained by the Arzel\`a--Ascoli theorem. It is interesting to draw a parallel with the theory of finite-dimensional Riemannian manifolds $M$. Assume that there exists a sequence $(x_n(t))_{n\in\N}$ of absolutely continuous trajectories on $M$ (uniformly in time) and let $v_n$ be the associated vector field. If $(x_n)_{n\in\N}$ converges to $x$ in $C^0([0,T];M)$, we cannot generally conclude the compactness of $(v_n)_{n\in\N}$, since the convergence in $C^0([0,T];M)$ is not sufficiently strong. We can first identify a chart that contains all the vectors $v_n$ and $x_n$. Then we can prove the convergence on this common chart. This is precisely the approach used to pass from the curved space $P_2(\Omega)$ to the ``chart'' of distributions, i.e., we viewed the system in the flat chart of distributions with its own topology when we proved the convergence of $u_i^{(m)}\sum_{j=1}^N a_{ij}\na u_j^{(m)}$. 
\qedd\end{remark}

%%%%%%%%%%%%%%%%%%%%%%%%%%%%%%%%%

\subsection{A fourth-order Busenberg--Travis system}\label{sec.fourth}

The approach of the previous section can be extended to an energy functional containing gradients, namely
\begin{align*}
  \mathcal{E}(u) = \frac12\int_\Omega\bigg(\sum_{i,j=1}^N
  a_{ij}u_iu_j + \sum_{i=1}^N|\na u_i|^2\bigg)dx,
\end{align*}
giving the fourth-order Busenberg--Travis system
\begin{align*}
  \pa_t u_i = \diver\bigg(u_i\sum_{j=1}^n a_{ij}\na u_j
  + u_i\na\Delta u_i\bigg)\quad\mbox{on }\Omega,\ t>0,
\end{align*}
with initial and no-flux boundary conditions. This system was analyzed in \cite{CEFF24} with $a_{ij}\le 0$, giving an aggregation-diffusion model, where the fourth-order diffusion competes with backward diffusion. To be consistent with our framework, we assume that $A=(a_{ij})$ is positive definite.

The gradient term in the energy complicates the limit $m\to\infty$ performed in the previous section. Let $\phi_i$ be a smooth test function and set $\theta_\eps = I-\eps\na\phi_i$ for $i=1,\ldots,N$. Furthermore, let $v_i^k(\eps)=(\theta_\eps)_{\#}u_i^k$ and $v_j^k(\eps)=u_j^k$ for $j\neq i$. We deduce from the chain rule and \eqref{2.cov} that
\begin{align*}
  \int_\Omega&|\na v_i^k(\eps)|^2 dx
  = \int_\Omega\bigg|\na\bigg(\frac{u_i^k}{\det\theta'_\eps}
  \circ\theta_\eps^{-1}\bigg)\bigg|dx
  = \int_\Omega\bigg|(\theta_\eps^{-1})'
  \na\bigg(\frac{u_i^k}{\det\theta'_\eps}\bigg)
  \circ\theta_\eps^{-1}\bigg|^2 dx \\
  &= \int_\Omega\bigg|(\theta_\eps^{-1})'\circ\theta_\eps
  \na\bigg(\frac{u_i^k}{\det\theta'_\eps}\bigg)\bigg|^2 
  |\det\theta'_\eps|dx
  = \int_\Omega\bigg|(\theta'_\eps)^{-1}
  \na\bigg(\frac{u_i^k}{\det\theta'_\eps}\bigg)
  \bigg||\det\theta'_\eps|dx.
\end{align*}
It follows from the Taylor expansions $(\theta'_\eps)^{-1} = I + \eps\na^2\phi_i + o(\eps)$ and $\det\theta'_\eps = 1 - \eps\Delta\phi_i + o(\eps)$ after a computation that
\begin{align*}
  \int_\Omega&|\na v_i^k(\eps)|^2 dx
  = \int_\Omega\big|\na u_i^k + \eps\big(\na^2\phi_i\na u_i^k
  + \na\Delta\phi_i u_i^k + \Delta\phi_i\na u_i^k\big)
  + o(\eps)\big|^2 \\
  &\phantom{xx}\times|1-\eps\Delta\phi_i + o(\eps)|dx \\
  &= \int_\Omega|\na u_i^k|^2 dx + 2\eps\int_\Omega\big\{
  \na u_i^k\cdot\big(\na^2\phi_i\na u_i^k + \na\Delta\phi_i u_i^k
  + \Delta\phi_i\na u_i^k\big) \\
  &\phantom{xx}- \Delta\phi_i|\na u_i^k|^2\big\}dx + o(\eps) \\
  &= \int_\Omega|\na u_i^k|^2 dx + 2\eps\int_\Omega
  \na u_i^k\cdot\bigg(\na^2\phi_i\na u_i^k + \na\Delta\phi_i u_i^k
  + \frac12\Delta\phi_i\na u_i^k\bigg)dx + o(\eps).  
\end{align*}
This shows that
\begin{align}\label{3.grad}
  \frac{1}{2\eps}&\int_\Omega\big(|\na v_i^k(\eps)|^2 
  - |\na u_i^k|^2\big)dx \\
  &= \int_\Omega
  \na u_i^k\cdot\bigg(\na^2\phi_i\na u_i^k + \na\Delta\phi_i u_i^k
  + \frac12\Delta\phi_i\na u_i^k\bigg)dx + o(1). \nonumber 
\end{align}
By formal integration by parts on the right-hand side, we find that
\begin{align*}
  \int_\Omega &
  \na u_i^k\cdot\bigg(\na^2\phi_i\na u_i^k + \na\Delta\phi_i u_i^k
  + \frac12\Delta\phi_i\na u_i^k\bigg)dx \\
  &= \int_\Omega\bigg(\na u_i^k\na^2\phi_i\na u_i^k
  - \frac12\Delta\phi_i|\na u_i^k|^2 
  - u_i^k\Delta\phi_i\Delta u_i^k\bigg)dx \\
  &= \int_\Omega\bigg(\na u_i^k\na^2\phi_i\na u_i^k
  + \na\phi_i\na^2 u_i^k\na u_i^k 
  - u_i^k\Delta\phi_i\Delta u_i^k\bigg)dx \\
  &= -\int_\Omega\big(\Delta u_i^k\na u_i^k\cdot\na\phi_i
  + u_i^k\Delta\phi_i\Delta u_i^k\big)dx
  = \int_\Omega u_i^k\na\Delta u_i^k\cdot\na\phi_i dx.
\end{align*}
Consequently,
\begin{align*}
  \frac{1}{2\eps}\int_\Omega\big(|\na v_i^k(\eps)|^2 
  - |\na u_i^k|^2\big)dx
  = \int_\Omega u_i^k\na\Delta u_i^k\cdot\na\phi_i dx + o(1).
\end{align*}
To pass to the limit $m\to\infty$ in the terms involving $\na u_i^k$ in \eqref{3.grad}, we need compactness in $L^2(0,T;H^1(\Omega))$. The $L^2(0,T;L^2(\Omega))$ compactness can be shown as in the previous section, yielding $u_i^{(m)}\to u_i$ strongly in $L^2(0,T;L^2(\Omega))$, possibly for a subsequence. For the gradient term, we use the Gagliardo--Nirenberg inequality:
\begin{align*}
  \int_0^T&\|\na(u_i^{(m)}-u_i)\|_{L^2(\Omega)}^2 dt \\
  &\le C\int_0^T\|u_i^{(m)}-u_i\|_{L^2(\Omega)}
  \|\na^2(u_i^{(m)}-u_i)\|_{L^2(\Omega)}dt
  + C\int_0^T\|u_i^{(m)}-u_i\|_{L^2(\Omega)}^2 dt \\
  &\le C\|u_i^{(m)}-u_i\|_{L^2(0,T;L^2(\Omega))}\big(
  \|\na^2(u_i^{(m)}-u_i)\|_{L^2(0,T;L^2(\Omega))} 
  + \|u_i^{(m)}-u_i\|_{L^2(0,T;L^2(\Omega))}\big).
\end{align*}
Thus, to obtain strong convergence in $L^2(0,T;H^1(\Omega))$, it is sufficient to prove some bound in $L^2(0,T;H^2(\Omega))$. This is achieved by combining two ingredients. First, we infer from the flow interchange technique \cite[Sec.~4.1]{CEFF24} that there exists a constant $C>0$ independent of $m$ such that
\begin{align*}
  \int_0^T\int_\Omega|\Delta u_i^{(m)}|^2 dxdt \le C.
\end{align*}
Second, we need the following inequality:
\begin{align*}
  \int_\Omega|\na^2 u|^2 dx \le C_0\int_\Omega(\Delta u)^2 dx
  \quad\mbox{for all }u\in H^2(\Omega)
\end{align*}
and for some $C_0>0$. It holds true for the whole space $\R^d$ and the torus with equality and $C_0=1$ as well as for convex domains and functions $u\in H^2(\Omega)$ satisfying $\na u\cdot\nu=0$ on $\pa\Omega$. (The latter fact is a consequence of the inequality $\na|\na u|^2\cdot\nu\le 0$ on $\pa\Omega$; see \cite[Prop.~2.4]{LiMi13}.)

%%%%%%%%%%%%%%%%%%%%%%%%%%%%%%%%%%%%%%%%%%%%%%%%%%%%%%%%%%%%%

\section{Hyperbolic--parabolic Busenberg--Travis model}
\label{sec.hyper}

We have assumed in the previous section that the matrix $A=(a_{ij})$ is positive definite. This enabled us to derive gradient estimates for the unknowns. In this section, we consider matrices $A$ not having full rank. As an example we choose $a_{ij}=1/N$. Then \eqref{1.BT} can be written as
\begin{equation}\label{6.BT}
\begin{aligned}
  & \pa_t u_i = \diver(u_i\na p), \quad p = \frac{1}{N}\sum_{j=1}^N u_j
  \quad\mbox{in }\Omega,\ t>0, \\
  & u_i\na p\cdot\nu = 0 \quad\mbox{on }\pa\Omega,\ t>0, \quad
  u_i(0)=u^0_i\quad\mbox{in }\Omega, \ i=1,\ldots,N.
\end{aligned}
\end{equation}
We cannot apply the theory of Section \ref{sec.ex}, since the flow interchange technique gives only boundedness of $\na p$ in $L^2(0,T;L^2(\Omega))$, which is not sufficient to conclude the strong convergence of (an approximate sequence of) $u_i$. Summing equations \eqref{6.BT} over $i=1,\ldots,N$, we see that $p$ satisfies a porous-medium equation, while the equations for $u_i$ can be interpreted, for fixed $\na p$, as hyperbolic transport equations. Thus, system \eqref{6.BT} is of hyperbolic--parabolic type, and any existence analysis is nontrivial.

\subsection{A new definition of solution}
%\label{sec.new}

Because of the lack of suitable gradient estimates for $u_i$, the global existence of (possibly segregated) solutions is delicate. Including a drift term, \cite{KiMe18} proves the existence of segregated solutions in one space dimension, obtained as the limit of JKO solutions. The existence of weak solutions of bounded variation for general initial
data in one space dimension was shown in \cite{CFSS18} by using a variational splitting scheme. In the multidimensional case, global segregated Lagrangian solutions were constructed in \cite{Jac25}, and global dissipative measure-valued solutions were shown to exist in \cite{HoJu25}. Here, we present another notion of solution.

Introduce the projection operator $\pi:P_2(\Omega)^N\to P_2(\Omega)$, $\pi(u)=N^{-1}\sum_{i=1}^Nu_i$. We call the vector $u\in AC^2([0,T];P_2(\Omega)^N)$ (see Section \ref{sec.basics} for the definition of this space) a solution to \eqref{6.BT} %with no-flux boundary conditions 
if the following conditions are satisfied:
\begin{itemize}
\item The pressure $p:=\pi(u)$ is the unique solution to the porous-medium equation $\pa_t p=\diver(p\na p)$ in $\Omega$, $p\na p\cdot\nu=0$ on $\pa\Omega$, $t>0$.
\item It holds that $|u'|(t)\le \sqrt{N}|p'|(t)$ for a.e.\ $t\in[0,T]$, where $u'$ is the metric derivative.
\end{itemize}
We can interpret here $P_2(\Omega)^N$ as a fiber bundle with the base space $P_2(\Omega)$, and the projection operator equals $\pi$. The existence problem reduces to finding a time-dependent, sufficiently regular section $s$ such that $u=s(t,p(t,\cdot))$. 

\begin{theorem}
Let $u^0\in P_2(\Omega)^N$. Then there exists a solution $u\in C^0([0,T];P_2(\Omega)^N)$ to \eqref{6.BT} in the sense of the previous definition.
\end{theorem}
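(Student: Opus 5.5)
Our plan is to construct $u$ by transporting each initial component along the flow of the porous-medium velocity field, that is, by building the section $s$ directly from the Lagrangian structure of $p$. First, we take $p$ to be the unique solution of the porous-medium equation $\pa_t p=\diver(p\na p)$ with no-flux boundary conditions and initial datum $p^0=\pi(u^0)$. Existence and uniqueness can be obtained from the JKO scheme for the energy $\frac12\int_\Omega p^2dx$ on $P_2(\Omega)$, which is geodesically convex on the convex domain $\Omega$ by McCann's condition; the theory of \cite{AGS05} then applies. In particular, $p\in AC^2([0,T];P_2(\Omega))$, and by Theorem \ref{thm.fund} there is a velocity field $v_t=-\na p(t)\in T_{p(t)}P_2(\Omega)$ with $\|v_t\|_{L^2(p(t))}=|p'|(t)$ for a.e.\ $t$.

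Next, we lift $p$ to a measure on paths. By the superposition principle \cite[Theorem 8.2.1]{AGS05}, there is a probability measure $\eta$ on $C([0,T];\Omega)$ concentrated on solutions of $\dot\gamma=v_t(\gamma)$, and its time marginals satisfy $(e_t)_\#\eta=p(t)$. Since $u_i^0\le Np^0$, the density $\rho_i:=du_i^0/d(Np^0)$ lies in $[0,1]$ and $\sum_i\rho_i=1$ holds $p^0$-a.e. We then define
\begin{align*}
  u_i(t):=(e_t)_\#\big(N\rho_i(\gamma(0))\,\eta\big).
\end{align*}
Each $u_i(t)$ is a probability measure, and summing over $i$ gives $\sum_i u_i(t)=N(e_t)_\#\eta=Np(t)$, so $\pi(u(t))=p(t)$ and the first condition is satisfied. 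The weighted path measure also shows that $u_i$ solves the continuity equation $\pa_tu_i+\diver(u_iv_t)=0$, i.e.\ $\pa_tu_i=\diver(u_i\na p)$.

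For the metric-derivative bound, we use \cite[Theorem 8.3.1]{AGS05} in the direction that gives $|u_i'|(t)\le\|v_t\|_{L^2(u_i(t))}$. Since $u_i(t)\le Np(t)$, this yields $|u_i'|^2\le N\|v_t\|^2_{L^2(p(t))}=N|p'|^2$. Summing over $i$ would give $|u'|^2\le N^2|p'|^2$, which is too crude. To reach the sharper bound, we sum the squared velocities first: $\sum_i\|v_t\|^2_{L^2(u_i)}=\|v_t\|^2_{L^2(\sum_iu_i)}=N\|v_t\|^2_{L^2(p)}$. Hence $|u'|^2=\sum_i|u_i'|^2\le N|p'|^2$, that is, $|u'|\le\sqrt N|p'|$. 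The same estimate gives $u\in AC^2$, and continuity in $C^0([0,T];P_2(\Omega)^N)$ follows since $|p'|\in L^2(0,T)$.

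The main obstacle is the regularity of $v_t=-\na p$. It is not Lipschitz, so the flow need not be unique, and we must avoid relying on a classical flow map. The superposition principle sidesteps uniqueness, and the only bound required is $\int_0^T\|v_t\|^2_{L^2(p)}dt<\infty$, which follows from the energy dissipation of the porous-medium flow. One also has to check that the boundary condition is respected, i.e.\ that the curves remain in $\Omega$. This holds because the JKO construction keeps $p(t)$ supported in the convex set $\Omega$, so $\eta$-a.e.\ path stays in $\Omega$.
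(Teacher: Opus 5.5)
Your proof is correct, but it takes a genuinely different route from the paper's.

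The paper works in discrete time. It defines $u^{k+1}$ as a $W_2$-nearest point to $u^k$ in the fiber $\pi^{-1}(p(t_{k+1}))$. It then obtains $W_2^2(u^k,u^{k+1})\le N\,W_2^2(p(t_k),p(t_{k+1}))$ by testing with the competitor that pushes every $u_i^k$ forward by the single optimal map $I-\nabla\phi^{k\to k+1}$ of the pressure. Finally it applies Arzel\`a--Ascoli and passes $W_2(u(s),u(t))\le\sqrt N\int_s^t|p'|\,d\tau$ to the limit.

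You instead construct $u$ directly as a Lagrangian lift of $p$. The superposition principle replaces the possibly non-unique flow of $-\nabla p$, and the ratios $\rho_i=du_i^0/d(Np^0)$ are carried along the paths. The speed bound then follows from the converse part of \cite[Theorem 8.3.1]{AGS05} together with $\sum_i u_i(t)=Np(t)$. That identity is exactly what makes summing the squared velocities give $N|p'|^2$ rather than $N^2|p'|^2$; the paper uses the same identity in its competitor estimate.

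Your construction is essentially the continuous-time version of the alternative scheme $u^{k+1}=(I-\nabla\phi^{k\to k+1})_\#u^k$ mentioned in the paper's remark. It delivers more than the definition asks for. No subsequence is extracted, and each $u_i$ satisfies $\partial_t u_i=\diver(u_i\nabla p)$ weakly with the no-flux condition, since the paths stay in $\Omega$ and the flux makes sense because $u_i\le Np$. So you obtain an actual weak solution of \eqref{6.BT}, whereas the paper only controls $\pi(u)$ and $|u'|$.

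The price is heavier machinery: the superposition principle, and the identification of the tangent velocity of the porous-medium flow with $-\nabla p$. The identification is only needed for that extra equation. For the two conditions of the definition, any tangent field of $p$ from Theorem~\ref{thm.fund} suffices. The paper's argument needs only optimal maps between consecutive pressures and equicontinuity.

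One caveat is common to both proofs. The bound $|p'|\in L^2(0,T)$ comes from energy dissipation and requires $\int_\Omega (p^0)^2\,dx<\infty$, which does not follow from $u^0\in P_2(\Omega)^N$. The paper makes the same tacit assumption when it asserts $p\in AC^2([0,T];P_2(\Omega))$.
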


\begin{proof}
The existence of the pressure $p$ as the unique solution to the porous-medium equation is well known. We define
\begin{align}\label{6.min}
  u^{k+1} = \argmin_{u\in P_2(\Omega)^N,\, u\in\pi^{-1}(p(t_{k+1}))}
  W_2^2(u^k,u).
\end{align}
This equation is well defined in the sense that the minimum is attained at some point, since $P_2(\Omega)$ is compact and $\pi:P_2(\Omega)^N\to P_2(\Omega)$ is continuous, so the set $\pi^{-1}(p(t_{k+1}))$ is closed. Now, let $\phi^{k\to k+1}$ be the Kantorovich potential giving the transport from $p(t_k)$ to $p(t_{k+1})$. We define  $v_i^{k+1}:=(I-\na\phi^{k\to k+1})_{\#}u_i^k$ for $i=1,\ldots,N$. Then 
\begin{align*}
  \pi(v^{k+1}) = \frac{1}{N}\sum_{i=1}^N(I-\na\phi^{k\to k+1})_{\#}u_i^k
  = (I-\na\phi^{k\to k+1})_{\#}p(t_k) = p(t_{k+1}),
\end{align*}
and we find that (using \eqref{2.W2Kant})
\begin{align}\label{6.aux}
  W_2^2(u^k,u^{k+1}) &\le W_2^2(u^k,v^{k+1})
  \le \sum_{i=1}^N\int_\Omega|\na \phi^{k\to k+1}|^2 u_i^k dx \\
  &= N\int_\Omega|\na\phi^{k\to k+1}|^2 p(t_k) dx = NW_2^2(p(t_k),p(t_{k+1})).
  \nonumber 
\end{align}
The pressure $p$ is an element of $AC^2([0,T]; P_2(\Omega))$. Thus, inequality \eqref{6.aux} implies the equicontinuity of the sequence $(u^{(m)})$. By the Arzel\`a--Ascoli theorem, there exists a subsequence, still denoted by $(u^{(m)})$, such that $u^{(m)} \to u$ strongly in $C^0([0,T]; P_2(\Omega)^N)$ as $m \to \infty$. We deduce from the continuity of the projection operator that $p = \pi(u)$. 

To control the metric derivative, let $0 \le s < t \le T$. For a fixed $m$, let $t_j, t_k$ be the grid points such that $u^{(m)}(s) = u^j$ and $u^{(m)}(t) = u^k$. Using the triangle inequality and \eqref{6.aux}, we have
\begin{align*}
  W_2(u^{(m)}(t), u^{(m)}(s)) &\le \sum_{i=j}^{k-1} W_2(u^{i+1}, u^i) 
  \le \sqrt{N} \sum_{i=j}^{k-1} W_2(p(t_{i+1}), p(t_i)) \\
  &\le \sqrt{N} \int_{t_j}^{t_k} |p'|(\tau) d\tau.
\end{align*}
As $m \to \infty$, the grid points $t_j, t_k$ converge to $s, t$ respectively. Since $u^{(m)} \to u$ in $C^0([0,T]; P_2(\Omega)^N)$, the distance $W_2(u^{(m)}(t), u^{(m)}(s))$ converges to $W_2(u(t), u(s))$ for all $s, t \in [0,T]$. By the continuity of the integral of the $L^2(0,T)$ function $|p'|$, we obtain in the limit
\begin{align*}
  W_2(u(t), u(s)) \le \sqrt{N} \int_s^t |p'|(\tau) d\tau.
\end{align*}
Dividing by $t-s$ and taking the limit $s \to t$, we conclude that $|u'| \le \sqrt{N} |p'|$ a.e. in $[0,T]$, finishing the proof.
\end{proof}

\begin{remark}\rm 
We cannot expect the uniqueness of the solution $u$ to \eqref{6.BT} in the sense of the above definition, since the cost function in \eqref{6.min} (the Wasserstein distance) is not geodesically convex \cite[Sec.~2]{CMRS20}, i.e., $P_2(\Omega)$ is not a CAT(0) space. Moreover, the constraint $u\in\pi^{-1}(p(t_{k+1}))$ is not convex in the geodesic sense. 
\end{remark}

\begin{remark}\rm 
An alternative to scheme \eqref{6.aux} is to choose directly $u^{k+1}=v^{k+1}=(I-\na\phi^{k\to k+1})_{\#}u^k$, since this choice means that the species are transported by the pressure differences, which should better reflect the underlying physics. However, a priori both constructions are different and the answer whether they coincide lies outside of the scope of this paper.
\end{remark}

%%%%%%%%%%%%%%%%%%%%%%%%%%

\subsection{One-dimensional equations}

In the following, we discuss the one-dimensional setting, following \cite{CFSS18}. Let $\Omega=[0,1]$, $N=2$, and consider initial data in $L^\infty(\Omega)\cap BV(\Omega)$, where $BV(\Omega)$ is the space of functions with bounded variation. We describe briefly the method and explain why one cannot easily generalize it to arbitrary dimensions. The method relies on the following change of unknowns (already used in \cite{BGHP85}),
\begin{align*}
  p = u_1+u_2, \quad r = \frac{u_1}{p},
\end{align*}
which leads to the decoupled transport--diffusion system
\begin{align*}
  \pa_t p = \pa_x(p\pa_x p), \quad \pa_t r = \pa_x p\pa_x r
  \quad\mbox{in }\Omega,\ t>0.
\end{align*}
Intuitively, the BV norm of the solution to the porous-medium equation decreases over time. To prove it rigorously, the authors of \cite{CFSS18} use the JKO scheme as in Section \ref{sec.ex}. We recall the argument in a few formal steps. The first step is to prove that $\pa_x\phi_1^{k+1\to k}=\pa_x\phi_2^{k+1\to k}=-\pa_x p^{k+1}$. We set $\pa_x\phi^{k+1\to k}:=\pa_x\phi_i^{k+1\to k}$ for $i=1,2$. In the second step, we use the five gradient inequality \cite[Lemma 3.2]{DMSV16}, which reads as
\begin{align*}
  \int_\Omega\big(\pa_x u_i^{k+1}h'(\pa_x\phi^{k+1\to k})
  + \pa_x u_i^{k} h'(\pa_x\phi^{k\to k+1})\big)dx \ge 0
\end{align*}
for all increasing, convex, radially symmetric functions $h:\R\to\R$. We choose $h(x)=|x|$ and sum over $i=1,2$:
\begin{align*}
  0 &\le\int_\Omega\bigg(\pa_x p^{k+1}
  \frac{\pa_x\phi^{k+1\to k}}{|\pa_x\phi^{k+1\to k}|}
  + \pa_x p^{k}
  \frac{\pa_x\phi^{k\to k+1}}{|\pa_x\phi^{k\to k+1}|}\bigg)dx \\
  &=\int_\Omega\bigg(-\pa_x p^{k+1}\frac{\pa_x p^{k+1}}{|\pa_x p^{k+1}|}
  + \pa_x p^k\frac{\pa_x p^k}{|\pa_x p^k|}\bigg)dx.
\end{align*}
This implies that
\begin{align*}
  \int_\Omega |\pa_x p^{k+1}|dx \le \int_\Omega|\pa_x p^k|dx, 
\end{align*}
and we infer the BV estimate. To estimate the variable $r^{k+1}$, we remark that
\begin{align}\label{6.r}
  r^{k+1} = \frac{u_1^{k+1}}{p^{k+1}}
  = \frac{(I-\pa_x\phi^{k\to k+1})_{\#}u_1^k}{
  (I-\pa_x\phi^{k\to k+1})_{\#}p^k}
  = \frac{u_1^k\circ(I-\pa_x\phi^{k\to k+1})}{
  p^k\circ(I-\pa_x\phi^{k\to k+1})}
  = r^k\circ(I-\pa_x\phi^{k\to k+1}).
\end{align}
Recall that the total variation of a function $g$ is defined by
\begin{align}\label{6.TV}
  TV(g) = \sup_{P_n}\sum_{i=0}^n|g(x_{i+1})-g(x_i)|,
\end{align}
where $P_n$ is the set of partitions $0=x_0<x_1<\cdots<x_n=1$. As the transport map $I-\pa_x\phi^{k\to k+1}$ is increasing \cite[Sec.~2.1]{San15}, we conclude from \eqref{6.r} that $TV(r^{k+1})\le TV(r^k)$, giving a BV bound for $r^{k+1}$. Consequently, $u_1^{k+1}=r^{k+1}\cdot p^{k+1}$ and $u_2^{k+1} = p^{k+1} - u_1^{k+1}$ are of bounded variation as $(BV,\cdot)$ has a ring structure. 

These bounds are sufficient to conclude the relative compactness of the sequence $(u^{(m)})$. Indeed, using the Arzel\`a--Ascoli theorem as in the proof of Lemma \ref{lem.L2}, there exists a subsequence that converges in $C^0([0,T];P_2(\Omega))$ to some function $u$. The pointwise in time convergence in $L^1(\Omega)$ follows from the boundedness in $BV(\Omega)\cap L^\infty(\Omega)$. The $L^\infty(\Omega)$ bound implies the pointwise in time convergence in $L^2(\Omega)$. Finally, by dominated convergence, we conclude the strong convergence in $L^2(0,T;L^2(\Omega))$. For details, we refer to \cite{CFSS18}. 

\begin{remark}\rm 
The one-dimensional characterization of $BV(\Omega)$ given by \eqref{6.TV} prevents the generalization of the previous proof to several space dimensions. In the $N$-species case, the change of unknowns becomes 
\begin{align*}
  p := \sum_{i=1}^n u_i, \quad r_i := \frac{u_i}{p} \quad
  \mbox{for }i=1,\ldots,N-1.
\end{align*}
For given $(p,r_1,\ldots,r_{N-1})$, the inverse transformation is given by $u_i = pr_i$ for $i=1,\ldots,N-1$ and $u_N = p - \sum_{i=1}^{N-1} u_i$. This can be generalized even to systems of the form $\pa_t u_i = \diver(k_iu_i\na p(u)) = 0$ with $p(u) = \sum_{i=1}^N u_i$ and $k_i>0$; see \cite[Theorem 2.2]{DHJ23}. 
\qedd\end{remark}

%%%%%%%%%%%%%%%%%%%%%%%%%%%%%%%%%

\section{Complementary effects from the SKT model}\label{sec.SKT}

The Shigesada--Kawasaki--Teramoto (SKT) equations are defined by
\begin{align}\label{7.SKT}
  \pa_t u_i = \Delta(u_i p_i(u)) 
  = \diver(u_i\na p_i(u) + p_i(u)\na u_i)\quad\mbox{in }\Omega,
  \ t>0,\ i=1,\ldots,N,
\end{align}
with initial and no-flux boundary conditions, where $p_i(u)=\sum_{i=1}^N a_{ij}u_j$. The model has been first suggested in \cite{SKT79}. The existence of global weak solutions (under a certain detailed balance condition on the coefficients $a_{ij}$) was proved in \cite{CDJ18}. Compared to the Busenberg--Travis equations \eqref{1.BT}, model \eqref{7.SKT} contains the additional diffusion term $\diver(p_i(u)\na u_i)$, which is expected to decrease the segregation effect. 

The SKT model can be derived from stochastic interacting many-particle systems in the diffusion limit. The approach of \cite{CDHJ21} considers diffusion terms in the stochastic model that depend nonlinearly on the interactions between the individuals, while the work \cite{DDD19} proves the mean-field limit of a microscopic many-particle Markov process on a discrete one-dimensional space, leading to a finite-difference discretization of \eqref{7.SKT}. 

In order to understand this additional diffusion effect given by $\diver(p_i(u)\na u_i)$, we give another approach: The state of the particle system is given by a probability measure $p(x_1,\ldots,x_N)$ on the set of positions $x_i$ of particles of type $i$, meaning that we have $N$ stochastic particles of different type. For the sake of simplicity, we restrict ourselves to the case of two species, $N=2$. Adapting the work \cite{DDD19} to our context, the equation satisfied by $p$ reads as
\begin{align}\label{7.p}
  \pa_t p = \diver(M(|x_1-x_2|)p\na p) \quad\mbox{in }\Omega,\ t>0,
\end{align}
supplemented by no-flux boundary conditions, where the mobility function $M:[0,\infty)\to(0,\infty)$ is an approximation of the Dirac delta distribution at $|x_1-x_2|=0$. 

If the initial data $u^0=(u_1^0,u_2^0)$ is chaotic in the sense that $p(0,x_1,x_2) = (u_1\otimes u_2)(x_1,x_2) = u_1^0(x_1)\cdot u_2^0(x_2)$ is expressed as an elementary tensor product, we cannot expect that this structure is preserved in time. We confirm this statement by a numerical simulation. Let the initial data be given by a Gaussian function with diagonal covariance centered at $(-2,2)$ and with a mobility $M$ that is a Gaussian approximation of the delta distribution at $x_1=x_2$. Equation \eqref{7.p} is discretized by an explicit-in-time centered finite-difference scheme. Figure \ref{fig:marginales} shows the marginals $u_1$ and $u_2$ and the joint density $p$ at time $t=1$. We observe that the particles become correlated during the dynamics. As soon as $p$ has support on the set where $x_1=x_2$, the relative entropy
\begin{align*}
  H(p\,||\,u_1\otimes u_2) = \int_\Omega p\log
  \bigg(\frac{p}{u_1\otimes u_2}\bigg)dx
\end{align*}
increases in time; see Figure \ref{fig:entropie}. 

\begin{figure}[htb]
    \centering
    \includegraphics[width=0.95\textwidth]{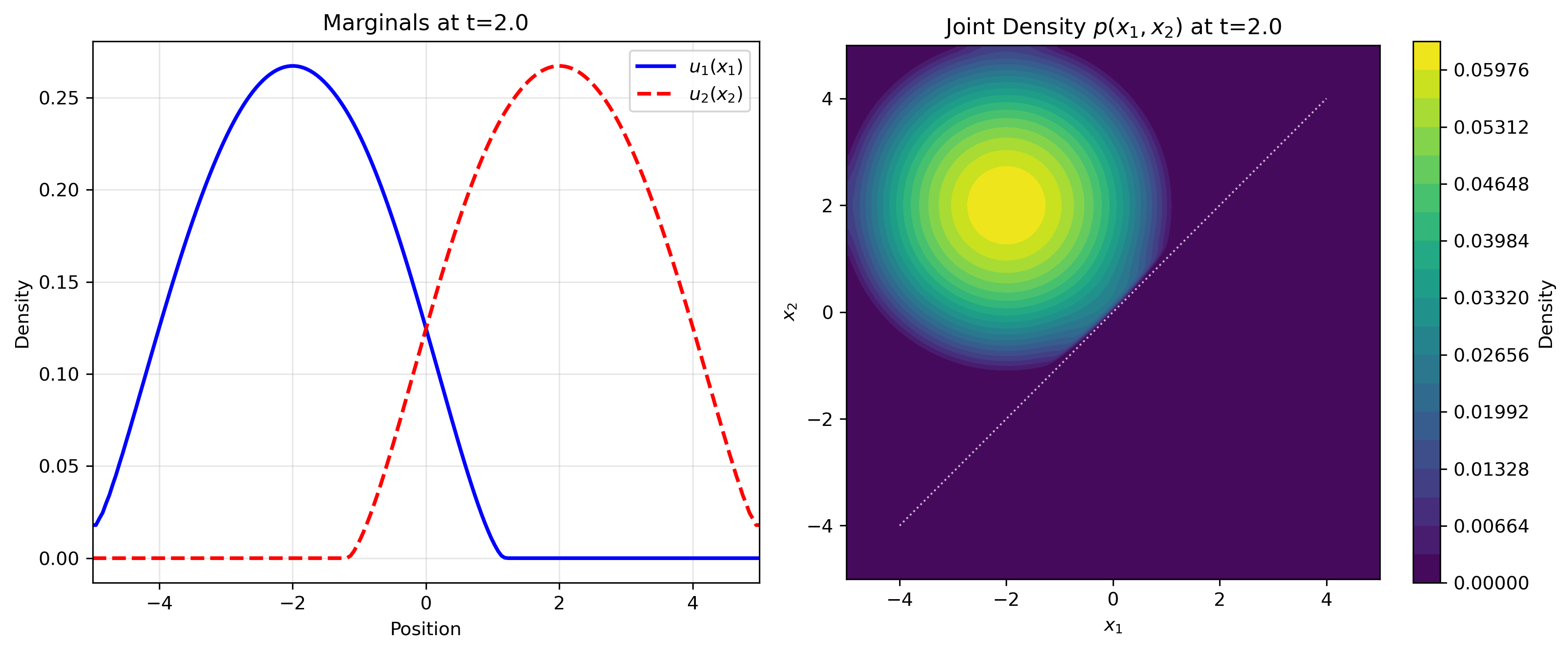}
    \caption{Marginals $u_1(x_1)$, $u_2(x_2)$ (left) and joint density $p(x_1,x_2)$ (right) at time $t=1$.}
    \label{fig:marginales}
\end{figure}
\begin{figure}[htb]
    \centering
    \includegraphics[width=0.7\textwidth]{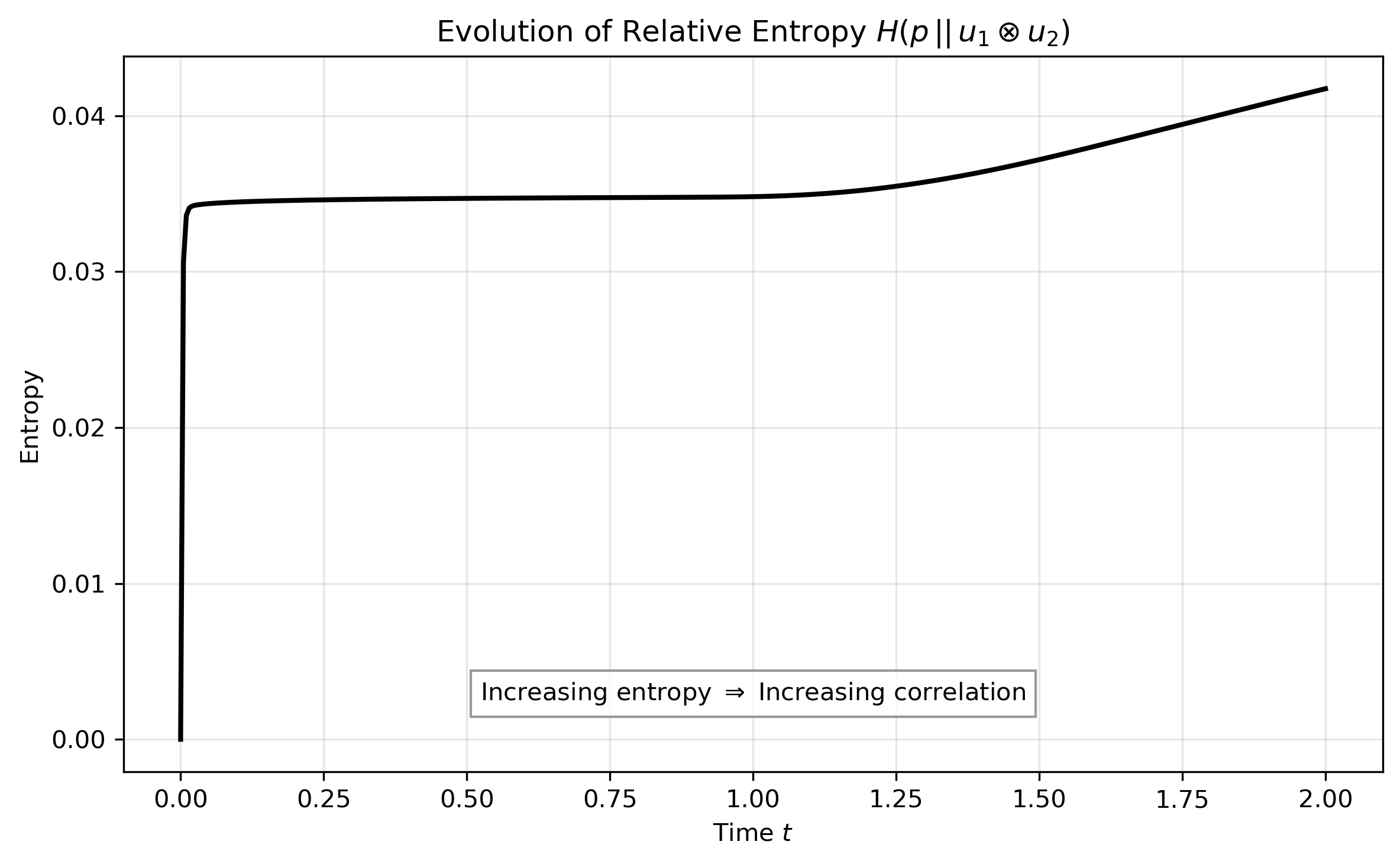}
    \caption{Relative entropy $H(p \,||\, u_1 \otimes u_2)$.}
    \label{fig:entropie}
\end{figure}

It is possible to enforce decorrelation by adding a stiff collision term ensuring the constraint $p=u_1\otimes u_2$. To this end, we endow the space $P_2(\Omega)^2 = P_2(\Omega)\times P_2(\Omega)$ with the metric on $P_2(\Omega^2)=P_2(\Omega\times\Omega)$ such that
\begin{align*}
  \widetilde{W}_2^2((u_1,u_2),(v_1,v_2))
  = \inf_{\gamma\in\Pi(u_1\otimes u_2,v_1\otimes v_2)}
  \int_{\Omega^4}d_{\Omega^2}^2((x_1,x_2),(y_1,y_2))
  d\gamma(x_1,x_2,y_1,y_2),
\end{align*}
where $d_{\Omega^2}$ is the geodesic distance on the Riemannian manifold $(\Omega^2,M^{-1})$.

\begin{proposition}\label{prop.cl}
Assume that the mobility $M$ is a smooth positive function such that $M\le C_M$ in $[0,\infty)$ for some $C_M>0$. Then the space $P_2(\Omega)^2$ is closed in $(P_2(\Omega^2),\widetilde{W}_2)$. 
\end{proposition}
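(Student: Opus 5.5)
Here $P_2(\Omega)^2$ is identified with the set of product measures $\{u_1\otimes u_2\}\subset P_2(\Omega^2)$. We must show that this set is closed for $\widetilde W_2$. So let $u^n=u_1^n\otimes u_2^n$ be a sequence with $\widetilde W_2(u^n,\mu)\to 0$ for some $\mu\in P_2(\Omega^2)$; the goal is to show that $\mu=\mu_1\otimes\mu_2$, where $\mu_1,\mu_2$ are the marginals of $\mu$.

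The plan has three steps. First, compare $d_{\Omega^2}$ with the Euclidean distance. Since the Riemannian metric is $M^{-1}$ times the identity and $M\le C_M$, every curve $\eta$ satisfies
\[
\int|\dot\eta|_{M^{-1}}\,dt \ge C_M^{-1/2}\int|\dot\eta|\,dt \ge C_M^{-1/2}\,|\eta(1)-\eta(0)|.
\]
Taking the infimum over curves gives $d_{\Omega^2}(z,w)\ge C_M^{-1/2}|z-w|$ on $\Omega^2$. The set $\Omega^2$ is convex, so Euclidean segments are admissible curves in this computation. Consequently the Euclidean Wasserstein distance on $P_2(\Omega^2)$ satisfies $W_2\le \sqrt{C_M}\,\widetilde W_2$. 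Hence $u^n\to\mu$ in the Euclidean $W_2$, and therefore narrowly, because $\Omega^2$ is compact.

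Second, pass to the marginals. The projections $\pi_1,\pi_2$ are continuous, so $u_i^n=(\pi_i)_\#u^n\to(\pi_i)_\#\mu=\mu_i$ narrowly for $i=1,2$. It is a standard fact that narrow convergence of the factors implies narrow convergence of the products: $u_1^n\otimes u_2^n\to\mu_1\otimes\mu_2$ narrowly. One way to check this is with test functions $\phi(x_1)\psi(x_2)$, which give
\[
\int\phi\,du_1^n\int\psi\,du_2^n\to\int\phi\,d\mu_1\int\psi\,d\mu_2 .
\]
Linear combinations of such functions are dense in $C(\Omega^2)$ by Stone--Weierstrass, and all the measures involved have unit mass, so the convergence extends to all of $C(\Omega^2)$. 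Uniqueness of narrow limits then yields $\mu=\mu_1\otimes\mu_2\in P_2(\Omega)^2$, which proves closedness.

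The main obstacle is the first step, namely making sure that $\widetilde W_2$-convergence really controls narrow convergence. This requires the lower bound $d_{\Omega^2}\gtrsim|\cdot|$, and that bound is exactly where the hypothesis $M\le C_M$ enters: without it, $d_{\Omega^2}$ could degenerate near the diagonal $x_1=x_2$. Smoothness and positivity of $M$ guarantee that $d_{\Omega^2}$ is a genuine continuous metric, so $\widetilde W_2$ is well defined and lower semicontinuous for narrow convergence. The remaining steps are routine.
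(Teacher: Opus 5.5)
Your proof is correct and follows essentially the same route as the paper. Both rest on the estimate $|z-w|\le\sqrt{C_M}\,d_{\Omega^2}(z,w)$ coming from $M\le C_M$, and both identify the limit through tensor test functions $\phi(x_1)\psi(x_2)$. The one difference is that the paper applies the estimate to the projections $\pi_i$ to get $W_2$-convergence of the marginals, whereas you apply it to the joint measure. This makes explicit two facts the paper uses tacitly: the narrow convergence $p_n\to p$, and (via Stone--Weierstrass) that tensor test functions determine the limit. Your remark about convexity of $\Omega^2$ is unnecessary, since the lower bound on $d_{\Omega^2}$ holds for any curve.
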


\begin{proof}
Define the projections $\pi_i:(\Omega^2,d_{\Omega^2})\to (\Omega,d_\Omega)$ by $\pi_1(x,y)=x$ and $\pi_2(x,y)=y$, where $d_\Omega$ is the Euclidean distance. These functions are Lipschitz continuous. Indeed, we have $d(\pi_1)(x,y)(dx,dy)= dx$ and
\begin{align*}
  |d(\pi_1)(x,y)(dx,dy)| = |dx|
  \le \sqrt{C_M}\sqrt{M(x,y)^{-1}(dx^2+dy^2)},
\end{align*}
and similarly for $\pi_2$. This yields for all $x_i$, $y_i\in\Omega$ and $i=1,2$ that
\begin{align*}
  |x_i-y_i| \le \sqrt{C_M}d_{\Omega^2}((x_1,x_2),(y_1,y_2))
\end{align*}
and consequently, by definition of the Wasserstein distance, for all $\mu$, $\nu\in P_2(\Omega)^2$,
\begin{align*}
  W_2((\pi_i)_{\#}\mu,(\pi_i)_{\#}\nu) 
  \le \sqrt{C_M}\widetilde{W}_2(\mu,\nu),\quad i=1,2.
\end{align*}

Now, let $(p_n)_{n\in\N}$ be a converging sequence in $P_2(\Omega)^2$, i.e.\ $p_n = u_{1,n}\otimes u_{2,n}$, where $u_{i,n}=\pi_i\# p_n:=(\pi_i)_{\#}p_n$ for $i=1,2$ are the marginals of $p_n$. Denoting by $p$ the corresponding limit, we need to show that $p\in P_2(\Omega)^2$. It follows from the previous inequality that
\begin{align*}
  W_2(u_{i,n},\pi_i\# p) \le \sqrt{C_M}\widetilde{W}_2(p_n,p)
  \to 0 \quad\mbox{as }n\to\infty,\ i=1,2,
\end{align*}
which shows that $\lim_{n\to\infty}u_{i,n}=\pi_i\# p$. Let $\phi$, $\psi\in C^2(\Omega)$. Then the limit in
\begin{align*}
  \int_{\Omega^2}\phi(x)\psi(y)dp_n(x,y) 
  = \int_{\Omega^2}\phi(x)\psi(y)d(u_{1,n}\otimes u_{2,n})(x,y)
  = \int_\Omega\phi du_{1,n}\int_\Omega\psi du_{2,n}
\end{align*}
leads to
\begin{align*}
  \int_{\Omega^2}\phi(x)\psi(y)dp(x,y) 
  = \int_\Omega\phi d(\pi_1\#p)\int_\Omega\psi d(\pi_2\#p).
\end{align*}
This shows that $p\in P_2(\Omega)^2$ and finishes the proof.
\end{proof}

We define a limiting curve as the successive proximal gradient (or obtained from the JKO scheme) of the energy $\mathcal{E}(p)=\frac12\int_{\Omega^2}p^2 d(x,y)$ with respect to the metric given by $\widetilde{W}_2$. The equations satisfied by the marginals read as
\begin{equation}\label{7.eq}
\begin{aligned}
  \pa_t u_1 &= \diver\bigg(u_1\bigg(\int_\Omega M(|x-y|)
  u_2^2 dy\bigg)\na u_1\bigg), \\
  \pa_t u_2 &= \diver\bigg(u_2\bigg(\int_\Omega M(|x-y|)
  u_1^2 dy\bigg)\na u_2\bigg) \quad\mbox{in }\Omega,\ t>0.
\end{aligned}
\end{equation}
This is a system of quadratic porous-medium equations that are nonlocally coupled by the mobility. We cannot prove the existence of a curve of maximal slope $p$ in the same way as in \cite{AGS05}, since $(P_2(\Omega)^2,\widetilde{W}_2)$ is not a geodesic space. Indeed, the set of tensor products of measures in $P_2(\Omega^2)$ 
is generally not geodesically convex unless the underlying distance 
$d_{\Omega^2}$ is a product distance.

\begin{theorem}
Let $c_M\le M \leq C_M$ in $[0,\infty)$ for some positive constant $c_M$. Then there exists a weak solution to \eqref{7.eq} in $C^0([0,T];(P_2(\Omega)^2,\widetilde{W}_2))$. 
\end{theorem}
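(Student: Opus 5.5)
We follow the strategy of Section \ref{sec.ex}, replacing the product metric $W_2$ by $\widetilde W_2$ and working on the closed constraint set of tensor products, which is closed by Proposition \ref{prop.cl}. Given a partition with steps $\tau_k$ satisfying \eqref{eq:conv_unif_timestep}, we define the constrained JKO scheme
\begin{align*}
  (u_1^{k+1},u_2^{k+1}) \in \argmin_{(v_1,v_2)\in P_2(\Omega)^2}
  \bigg(\frac{\widetilde W_2^2((u_1^k,u_2^k),(v_1,v_2))}{2\tau_k}
  + \frac12\int_{\Omega^2}(v_1\otimes v_2)^2\,d(x,y)\bigg).
\end{align*}
A minimizer exists by the direct method. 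The energy equals $\frac12\|v_1\|_{L^2}^2\|v_2\|_{L^2}^2$ and is weakly lower semicontinuous along weakly converging tensor products. The metric $\widetilde W_2$ is lower semicontinuous for narrow convergence, and since $c_M\le M\le C_M$ it is equivalent to the Euclidean $W_2$ on $P_2(\Omega^2)$. Hence $P_2(\Omega^2)$ is compact, and Proposition \ref{prop.cl} guarantees that limits remain tensor products. As in Lemma \ref{lem.L2}, comparing with $v=u^k$ yields $\mathcal E(u^{k+1})+\widetilde W_2^2(u^k,u^{k+1})/(2\tau_k)\le\mathcal E(u^k)$. This gives the H\"older-$1/2$ estimate in $\widetilde W_2$ and, via Arzel\`a--Ascoli, a limit $u\in C^0([0,T];(P_2(\Omega)^2,\widetilde W_2))$. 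It also gives a uniform bound on $\|u_1^{(m)}\|_{L^2}\|u_2^{(m)}\|_{L^2}$. To obtain separate $L^2$ bounds, we use $\|u_i\|_{L^2}\ge|\Omega|^{-1/2}$ for probability densities.

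For compactness, we apply the flow interchange technique, as in Lemma \ref{lem.H1}, with the auxiliary entropy $\mathcal H(p)=\int_{\Omega^2}p\log p$. On tensor products this entropy equals $\mathcal H(u_1)+\mathcal H(u_2)$. We take as auxiliary flow the heat flow on $\Omega^2$ acting separately in each variable, $\pa_t u_i=\Delta u_i$; this flow preserves tensor products, so it stays inside the constraint set. Since the heat flow is not the gradient flow of $\mathcal H$ for $\widetilde W_2$ when $M$ is nonconstant, we need to control the EVI error. We would estimate the derivative of $\widetilde W_2^2(u^k,\cdot)$ along the heat flow through the Kantorovich potential on $(\Omega^2,M^{-1})$, using bounds on $M$ and its derivatives (smoothness of $M$). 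This gives a $\lambda$-type correction of size $C\,\widetilde W_2^2$. Evaluating the derivative of $\mathcal E$ along the heat flow produces the dissipation $\|u_2\|_{L^2}^2\|\na u_1\|_{L^2}^2+\|u_1\|_{L^2}^2\|\na u_2\|_{L^2}^2\ge|\Omega|^{-1}\sum_i\|\na u_i\|_{L^2}^2$. Summing over $k$ gives a uniform $L^2(0,T;H^1(\Omega))$ bound. Theorem \ref{thm.RS}, with $J=\|\na\cdot\|_{L^2}^2$ and $g=\widetilde W_2^2$, then yields strong convergence in $L^2(0,T;L^2(\Omega)^2)$, exactly as in Lemma \ref{lem.strong}.

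To derive the weak formulation, we perturb only the first marginal, $v_1(\eps)=(I-\eps\na\phi)_{\#}u_1^{k+1}$ with $v_2=u_2^{k+1}$, and similarly for the second marginal. The product $v_1(\eps)\otimes u_2^{k+1}$ is then the push-forward of $u_1^{k+1}\otimes u_2^{k+1}$ by $(x,y)\mapsto(x-\eps\na\phi(x),y)$. The first variation of $\widetilde W_2^2$ along this push-forward is given by the optimal plan for the cost $d_{\Omega^2}^2$: its gradient with respect to $x$ is $M\,\na_x\psi$, with $\psi$ the Kantorovich potential. From the optimality condition (the analogue of Lemma \ref{lem.kant}), $\psi/\tau_k+p$ is constant on the support. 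The first variation therefore produces $\int M(|x-y|)\,p\,\na_x p\cdot\na\phi\,d(x,y)$ with $p=u_1\otimes u_2$. Integrating over $y$ gives $\int u_1\big(\int M u_2^2dy\big)\na u_1\cdot\na\phi\,dx$. The Taylor remainder is $O(\widetilde W_2^2)$, as in \eqref{3.W1}, and vanishes after summation. We then pass to the limit $m\to\infty$ using strong $L^2$ convergence of $u^{(m)}$, weak convergence of $\na u^{(m)}$, and continuity of $u_2\mapsto\int M(|x-y|)u_2^2\,dy$ under strong $L^2$ convergence.

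The main obstacle is the flow interchange step. Since $(P_2(\Omega)^2,\widetilde W_2)$ is not geodesic and the heat flow is only an approximate gradient flow for the Riemannian metric $M^{-1}$, the EVI is not available off the shelf. We would first try comparison with the Euclidean heat flow through the metric equivalence $c_M\le M\le C_M$, accepting a constant depending on $\|M\|_{C^2}$.
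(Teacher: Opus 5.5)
Your overall route (constrained JKO on tensor products, a compactness estimate, first variation, Proposition~\ref{prop.cl}) is the paper's. However, the step that produces the flux of \eqref{7.eq} does not hold. Let $\psi$ be the Kantorovich potential for which $\exp(-M\na\psi)$ pushes $p^{k+1}=u_1\otimes u_2$ to $p^k$. For your competitor, the push-forward under $(x,y)\mapsto(x-\eps\na\phi(x),y)$, the first variation of $\frac12\widetilde W_2^2(p^k,\cdot)$ is $-\int\na_x\psi\cdot\na\phi\,dp^{k+1}$. The $g$-gradient $M\na\psi$ is paired with the velocity in the metric $M^{-1}$, so $M$ cancels. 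It appears only in the displacement $-M\na\psi$, i.e.\ in the Taylor expansion of $\int\phi(u_1^{k+1}-u_1^k)dx$.

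Moreover, ``$\psi/\tau_k+p$ constant on the support'' is the Euler--Lagrange equation of the \emph{unconstrained} problem on $P_2(\Omega^2)$. Since that problem is convex under linear interpolation, this condition would make $p^{k+1}$ the unconstrained minimizer, which in general is not a tensor product. Product-preserving perturbations $(I-\eps\xi)_{\#}u_1$, with $\xi$ an arbitrary vector field, only give
\begin{align*}
  \frac{1}{\tau_k}\int_\Omega\na_x\psi(x,y)u_2(y)dy = -\|u_2\|_{L^2}^2\na u_1(x)
  \quad\mbox{for $u_1$-a.e.\ $x$},
\end{align*}
whereas the Taylor expansion yields
\begin{align*}
  \int_\Omega\phi(u_1^{k+1}-u_1^k)dx = \int_\Omega u_1(x)\na\phi(x)\cdot
  \int_\Omega M(|x-y|)\na_x\psi(x,y)u_2(y)dy\,dx
  + O\big(\widetilde W_2^2(p^k,p^{k+1})\big).
\end{align*}
These are different $y$-averages of $\na_x\psi$, so for nonconstant $M$ the scheme does not produce a discretization of \eqref{7.eq}. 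Formally it approximates the gradient flow on the set of tensor products for the metric induced by $\widetilde W_2$; this coincides with \eqref{7.eq} when $M$ is constant, but not in general. The paper's sketch does not resolve this either. The first-variation formula it uses is the one along $\exp(-\eps M\na\phi_1)_{\#}p^{k+1}$, which is not a tensor product. The admissible competitor $v_1^{k+1}(\eps)\otimes u_2^{k+1}$ has tangent $\diver_x(\bar M u_1\na\phi_1)\otimes u_2$ with $\bar M(x)=\int_\Omega M(|x-y|)u_2(y)dy$, and the same mismatch reappears.

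Your compactness step is also not established, and the proposed repair does not work. Along the Euclidean heat flow, the derivative of $\frac12\widetilde W_2^2(p^k,\cdot)$ at $p^{k+1}$ equals, after integration by parts, $\int p^{k+1}\Delta\psi\,d(x,y)$. Displacement convexity of $\mathcal H$ on $(\Omega^2,M^{-1})$, which itself needs curvature and convexity assumptions, controls $\int p^{k+1}\diver(M\na\psi)\,d(x,y)$ instead. The difference $\int p^{k+1}\big((1-M)\Delta\psi-\na M\cdot\na\psi\big)\,d(x,y)$ contains second derivatives of $\psi$ and is not bounded by $C\widetilde W_2^2$. The bi-Lipschitz equivalence of $\widetilde W_2$ and $W_2$ does not transfer an EVI.

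The $\widetilde W_2$-heat flow $\pa_t p=\diver(M\na p)$ would give the dissipation $\int M|\na p|^2\ge c_M\|\na p\|^2$ presumably intended in the paper. But it leaves the set of tensor products and is therefore not an admissible competitor. A gradient bound can instead be read off from the constrained Euler--Lagrange equation above. By Jensen's inequality,
\begin{align*}
  \tau_k^2\|u_2\|_{L^2}^4\int_\Omega u_1|\na u_1|^2dx
  \le \int_{\Omega^2}|\na_x\psi|^2dp^{k+1}
  \le \frac{1}{c_M}\widetilde W_2^2(p^k,p^{k+1}).
\end{align*}
Together with $\|u_2\|_{L^2}\ge|\Omega|^{-1/2}$ and the energy inequality, this bounds $\na(u_1^{(m)})^{3/2}$ in $L^2(0,T;L^2(\Omega))$; this is where $M\ge c_M$ naturally enters. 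It restores compactness, but not the identification of the limit described above.
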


\begin{proof}
With the notation of Section \ref{sec.ex}, we define the JKO scheme on a temporal partition by
\begin{align*}
  p^{k+1} := \argmin_{p\in P_2(\Omega)^2}
  \bigg(\frac{\widetilde{W}_2(p^k,p)}{2\tau_k} + \mathcal{E}(p)\bigg).
\end{align*}
Using the flow interchange technique with the usual Boltzmann entropy and the fact that $M \geq c_M >0$, we can prove the uniform boundedness of the sequence of piecewise-in-time constant functions $p^{(m)}$ in $L^2(0,T;H^1(\Omega^2))\cap L^\infty(0,T;L^2(\Omega^2))$. Let $i\in\{1,2\}$, $\phi_i\in C_c^\infty(\Omega)$ and define $v_i^{k+1}(\eps) := (\pi_i \exp(-\varepsilon M \nabla \phi_i))_{\#}p^{k+1}$ and $v_j^{k+1}(\eps):=u_j^{k+1}$ for $j\neq i$, $p^{k+1}(\eps):=v_1^{k+1}(\eps)\otimes v_2^{k+1}(\eps)$. Here, $\pi_i : \Omega^2 \rightarrow \Omega$ denotes the canonical projection onto the $i$th component. 

By the definition of the JKO scheme, we have
\begin{align}\label{eq:limit_JKO_Wtilde}
  \frac{1}{2\tau_k\eps}\big(\widetilde{W}_2^2(p^k,p^{k+1}(\eps))
  - \widetilde{W}_2^2(p^k,p^{k+1})\big) 
  + \frac{1}{\eps}\big(\mathcal{E}(p^{k+1}(\eps))
  -\mathcal{E}(p^{k+1})\big)\ge 0.
\end{align}
For the ease of reading, we consider the case $i=1$ and $j=2$ in the following computations. The limit of the first term as $\eps\to 0$ is more delicate to establish than in the proof of Theorem \ref{thm.ex}, since we are working in a Riemannian manifold rather than in the Euclidean setting. The idea is to use the Riemannian version of the differentiability of the Wasserstein distance given in \cite[Theorem 23.9]{villani2009optimal}. Denoting by $\phi^{k+1 \rightarrow k}$ the Kantorovich potential from $p^{k+1}$ to $p^k$ and by $g$ the metric associated to $(\Omega, M^{-1})$ (with gradient $\nabla_g = M \nabla$), we find that
\begin{align*}
  \lim_{\varepsilon \rightarrow 0 } \frac{1}{2\eps}\big(\widetilde{W}_2^2(p^k,p^{k+1}(\eps))
  - \widetilde{W}_2^2(p^k,p^{k+1})\big) 
  &= - \int_{\Omega}\int_\Omega \big\langle \nabla_{g} \phi_1(x_1), \nabla_{g} \phi^{k+1 \rightarrow k} \big\rangle_{g} p^{k+1}dx_1 dx_2\\
  &= - \int_{\Omega}\int_\Omega  M \nabla \phi_1(x_1)\cdot \nabla_{x_1} \phi^{k+1 \rightarrow k} p^{k+1} dx_1 dx_2.
\end{align*}
Aside from this, and by a Taylor expansion similar to the one derived in the proof of Theorem \ref{thm.ex}, it holds for $\gamma \in \Gamma_0(p^k, p^{k+1})$ (the set of optimal transport plans coupling $p^k$ and $p^{k+1}$) that
\begin{align*}
  -\int_{\Omega} &\int_\Omega\phi_1(x_1) (p^{k+1}-p^k)(x_1,x_2)dx_1dx_2
  = \int_\Omega(\phi_1(y_1)-\phi_1(x_1))d\gamma(x_1,x_2, y_1,y_2) \\
  &= ´\int_{\Omega}\int_\Omega\big\{\phi_1\big( \pi_1 \exp_{x_1,x_2}(- \nabla_g \phi^{k+1 \rightarrow k}) \big)
  - \phi_1(x_1)\}p^{k+1}(x_1,x_2)dx_1dx_2 \\
  &= -\int_{\Omega}\int_\Omega M \na\phi_1(x_1) \cdot\na\phi^{k+1\to k}(x_1,x_2) p^{k+1}(x_1,x_2)dx_1dx_2
  + O(W_2^2(p^k,p^{k+1})).
\end{align*}
Consequently, 
\begin{align*}
  \lim_{\varepsilon \rightarrow 0 } 
  \frac{1}{2\eps}\big(\widetilde{W}_2^2(p^k,p^{k+1}(\eps))
  - \widetilde{W}_2^2(p^k,p^{k+1})\big) 
  &=  -\int_{\Omega}\int_\Omega\phi_1(x_1) (p^{k+1}-p^k)dx_1dx_2 \\
  &\phantom{xx}+ O(W_2^2(p^k,p^{k+1})).
\end{align*}
For the energy term in \eqref{eq:limit_JKO_Wtilde}, the derivation relies on a similar argument as in the proof of Theorem \ref{thm.ex}. Hence, the limit $\eps\to 0$ in \eqref{eq:limit_JKO_Wtilde} yields
\begin{align*}
  -\int_{\Omega}&\int_\Omega\phi_1(x_1)(p^{k+1}-p^k)(x_1,x_2)dx_1 dx_2 \\
  &\phantom{xx}+ \tau_k\int_{\Omega}\int_\Omega M(|x_1-x_2|)
  \na p^{k+1}(x_1,x_2)\cdot\na\phi_1(x_1)p^{k+1}(x_1,x_2)dx_1 dx_2 \\
  &\ge O\bigg(\sum_{k=0}^{m-1} \tau_k W_2^2(p^k,p^{k+1})\bigg).
\end{align*}
We sum over $k=0,\ldots,m-1$, then use the compactness of $(p^{(m)})$, given by the bounds in $L^2(0,T;H^1(\Omega^2))\cap L^\infty(0,T;L^2(\Omega^2))$, and finally the compactness in $C^0([0,T];P_2(\Omega)\times P_2(\Omega))$ to infer that in the limit $m\to\infty$,
\begin{align*}
  -\int_{\Omega}&\int_\Omega\phi_1(p(T)-p(0))(x,y)dxdy \\
  &+ \int_0^T\int_{\Omega}\int_\Omega 
  M(|x-y|)\na p(x,y)\cdot\na\phi_1(x)p(x,y)dx dydt \ge 0,
\end{align*}
where we used the same trick as the proof in Theorem \ref{thm.ex} to eliminate the remainder term. By changing $\phi_1$ to $-\phi_1$, we obtain equality. An analogous equality holds for $\phi_2$. 

We deduce from the convergence in $C^0([0,T];P_2(\Omega)^2)$ and Proposition \ref{prop.cl} that $p=u_1\otimes u_2$, where $u_1$ and $u_2$ are the marginals of $p$. This gives
\begin{align*}
  -\int_{\Omega}&\phi_1(x)(u_1(x,T)-u_1(x,0))dx \\
  &+ \int_0^T\int_{\Omega^2}M(|x-y|)u_1(x)u_2(y)^2\na u_1(x)\cdot
  \na\phi_1(x)d(x,y)dt = 0, \\
  -\int_{\Omega}&\phi_2(y)(u_2(y,T)-u_2(y,0))dy \\
  &+ \int_0^T\int_{\Omega^2}M(|x-y|)u_2(y)u_1(x)^2\na u_2(y)\cdot
  \na\phi_2(y)d(x,y)dt = 0,
\end{align*}
which corresponds to the weak formulation of \eqref{7.eq}.
\end{proof}

\begin{remark}\rm
An alternative approach to prove the existence of solutions to \eqref{7.eq} could be a fixed-point argument, defining the fixed-point operator $S:P_2(\Omega)^2\to P_2(\Omega)^2$, $(u_1,u_2)\mapsto(v_1,v_2)$, where $(v_1,v_2)$ is the unique solution to
\begin{align*}
  \pa_t v_1 &= \diver\bigg(v_1\bigg(\int_\Omega 
  M(|x-y|)u_2^2 dy\bigg)\na v_1\bigg), \\
  \pa_t v_2 &= \diver\bigg(v_2\bigg(\int_\Omega 
  M(|x-y|)u_1^2 dx\bigg)\na v_2\bigg)\quad\mbox{in }\Omega,
  \ t>0.
\end{align*}
To prove that $S$ is well defined, one may rely on the results of \cite{Kop17}, which studies gradient flows on time-varying metric spaces. More specifically, applying these results requires establishing the logarithmic time differentiability of the underlying metric on $\Omega$. This means that we have to prove the bound
\begin{align*}
  \bigg|\frac{d}{dt}\int_\Omega M(|x-y|)u_1(t,x)^2 dx\bigg|
  \le C\int_\Omega M(|x-y|) u_1(t,x)^2 dx
\end{align*}
(and similarly for $u_2$), which requires estimates on $\pa_t u_i$ that are not obvious to derive.
\end{remark}

\begin{remark}\rm
When we choose the energy $\mathcal{E}(p) = \int_{\Omega^2}p\log pd(x,y)$, we may adapt the arguments developed above. This yields the existence of a solution to the system
\begin{align*}
  \pa_t u_1 &= \diver\bigg(\bigg(\int_\Omega M(|x-y|)u_2 dy\bigg)
  \na u_1\bigg), \\
  \pa_t u_2 &= \diver\bigg(\bigg(\int_\Omega M(|x-y|)u_1 dy\bigg)
  \na u_2\bigg) \quad\mbox{in }\Omega,\ t>0.
\end{align*}
If $M$ is close to the delta distribution, this system can be seen as an approximation of
\begin{align*}
  \pa_t u_1 = \diver(u_2\na u_1), \quad
  \pa_t u_2 = \diver(u_1\na u_2).
\end{align*}
\end{remark}

%%%%%%%%%%%%%%%%%%%%%%%%%%%%%%%%%

\end{document}